\title{Dense halves in balanced bipartitions of $K_4$-free graphs}
\author{Yue Xu}
\author{Xiao-Dong Zhang\thanks{Corresponding author: xiaodong@sjtu.edu.cn}}
\date{September 2024}
\affil{School of Mathematical Sciences, MOE-LSC, SHL-MAC, \\
Shanghai Jiao Tong University, 800 Dongchuan Road, Shanghai, 200240, P.R. China}
\begin{document}

\maketitle

\begin{abstract}
    Let $G$ be a simple graph on even vertices. Denote $D_{2,\infty}^b(G)$ by the minimal value of $\max\{e(A),e(A^c)\}$ over all 2-balanced partition $V(G)=A\cup A^c$ where $|A|=|A^c|$. In this paper we first provide a lower bound on $D_{2,\infty}^b(G)$ by examining the graph  $2n-$blow-up of $I_7\vee C_5$, where $I_7$ represents a 7-vertex independent set and $C_5$ is a 5-cycle, which disproves Conjecture 2.6 in \cite{balogh2023}. Furthermore, we present two upper bounds on $D_{2,\infty}^b(G)$ for a $K_4$-free graph and a join of independent set and triangle-free graph.
\end{abstract}

\newtheorem{Def}{Definition}[section]
\newtheorem{Th}[Def]{Theorem}
\newtheorem{Prop}[Def]{Proposition}
\newtheorem{Conj}[Def]{Conjecture}
\newtheorem{Fact}[Def]{Fact}
\newtheorem{lemma}[Def]{Lemma}
\newtheorem{remark}[Def]{Remark}
\newtheorem{Cor}[Def]{Corollary}

\section{Introduction}

Let $G$ be a simple graph with vertex set $V(G)$ and edge set $E(G)$. We denote the number of vertices by $|G| = |V(G)|$ and the number of edges by $e(G) = |E(G)|$. Given vertex-subset $A$, we denote $e(G[A])$ the number of edges in $G$ with both endpoints from $A$. If $A$ and $B$ are two disjoint vertex subsets, we denote $e(G[A],G[B])$ the number of edges in $G$ connecting vertices from $A$ to vertices from $B$. When $G$ is clear from context, we also write $e(A)$ and $e(A,B)$. A vertex 2-partition of a graph $G$ is a partition $V(G)=A\cup A^c$ of its vertex subset into 2 classes. And we denote $D_2(G):=\min_{A}(e(A)+e(A^c))$ where the minimum is taken over all $2$-partitions. By definition, $D_2(G)$ also represents the minimum number of edges which have to be removed to make $G$ bipartite. Furthermore, 2-partition is balanced if $|A|=|A^c|$. And we denote
$$
\begin{aligned}
    D_2^b(G):=\min\limits_{A}(e(A)+e(A^c)); D_{2,\infty}^b(G):=\min\limits_{A}\left(\max\{e(A),e(A^c)\}\right)
\end{aligned}
$$
where the minimum is taken over all $2$-balanced-partitions $V(G)=A\cup A^c$. For a graph $H$, we define a graph $G$ as $H$-free if $G$ does not contain $H$ as a subgraph.

Given two graphs $G_1$ and $G_2$, the join of $G_1$ and $G_2$, denoted by $G_1\vee G_2$, is defined as a graph whose vertex set is $V(G_1) \cup V(G_2)$, and whose edge set is $E(G_1) \cup E(G_2) \cup { (v_1, v_2) \mid v_1 \in V(G_1), v_2 \in V(G_2) }$. For a graph $G$ and an integer $n$, the $n$-blow-up of $G$, denoted by $G^n$, is defined as a graph obtained by replacing every vertex $v$ of $G$ with $n$ vertices, where a copy of $u$ is adjacent to a copy of $v$ in the blow-up graph if and only if $u$ is adjacent to $v$ in $G$.  

Erdős once offered a \$250 reward to the solution of the sparse half conjecture:
\begin{Conj}\cite{erdos1988}
Let $G$ be a triangle-free graph on $n$ vertices. If $n$ is even, then there exists a a balanced 2-partition $V(G)=A\cup A^c$ such that
$$\min\{e(A),e(A^c)\}\leq \frac{n^2}{50}.$$
\end{Conj}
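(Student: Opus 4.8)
This is the celebrated sparse-half conjecture of Erdős, still open and carrying his reward, so I will only outline a strategy rather than claim a resolution. The first thing I would pin down is the extremal configuration that forces the constant: the balanced blow-up $C_5^{n/5}$ is triangle-free, and taking $A$ to be (class $1$) $\cup$ (class $3$) $\cup$ (half of class $4$) against its complement gives $e(A)=e(A^c)=n^2/50$. Hence the bound is tight, and any proof must be \emph{exact} at $C_5$-blow-ups; this already rules out a lossy averaging argument and tells me where the difficulty will concentrate.

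Two extremes are easy and should be cleared first. If $\alpha(G)\ge n/2$, I pick $A$ inside a maximum independent set with $|A|=n/2$, giving $e(A)=0$. If some vertex $v$ has $\deg(v)\ge n/2$, then $N(v)$ is independent by triangle-freeness, and again a half with no edges exists. So the hard regime is $\alpha(G)<n/2$ together with bounded maximum degree. Here I would use the reformulation that, since only \emph{one} side must be sparse, the balanced partition minimizing $e(A)+e(A^c)$ satisfies
\[
\min\{e(A),e(A^c)\}\le \tfrac12\bigl(e(A)+e(A^c)\bigr)=\tfrac12 D_2^b(G).
\]
Thus it would suffice to prove $D_2^b(G)\le n^2/25$ for every triangle-free $G$ on $n$ (even) vertices — a balanced version of the minimum number of edges whose removal makes $G$ bipartite. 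This inequality is again tight precisely at $C_5^{n/5}$, which I would aim to identify as the unique maximizer.

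The engine for the remaining regime would be stability together with flag algebras / semidefinite programming. By a Zykov-type symmetrization or a supersaturation argument I expect the extremal graphs for $D_2^b$ to be blow-ups of small triangle-free graphs, reducing the problem to a finite weighted optimization in which $C_5$ should win and return exactly $n^2/25$. A stability statement would then show that any $G$ whose $D_2^b(G)$ is near the maximum is structurally close to $C_5^{n/5}$ (or to a complete bipartite graph), and I would finish with a perturbation/rebalancing computation confined to these near-extremal graphs.

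The main obstacle is exactly the exactness of the constant combined with the balance constraint. Producing \emph{some} sparse vertex set is comparatively routine; forcing it to have size exactly $n/2$ blocks the naive ``take a sparse set and rebalance'' approach, because moving vertices to restore balance creates edges, and controlling this loss down to the tight factor $1/50$ is delicate. Equivalently, the flag-algebra SDP must be solved to optimality and complemented by a complete near-extremal stability analysis — precisely the step that has kept this conjecture open.
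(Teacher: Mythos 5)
This statement is Erd\H{o}s's sparse-half conjecture, which the paper records only as a cited open conjecture (Conjecture 1.1) and does not prove; the best progress it mentions is Razborov's bound of $(27/1024)n^2$ on a half of size $n/2$. You correctly recognize that no proof is being asked for or available, and your description of the extremal configuration (the balanced $C_5$ blow-up with $A$ equal to two non-adjacent classes plus half of a class adjacent to exactly one of them, giving $e(A)=e(A^c)=n^2/50$) is accurate. So there is no paper proof to compare your outline against, and declining to claim a resolution is the right call.

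One concrete flaw in your proposed route, though: the reduction ``it would suffice to prove $D_2^b(G)\le n^2/25$'' is a valid implication, but the target statement is false, so this path cannot work. The complete bipartite graph $K_{n/4,\,3n/4}$ is triangle-free and every balanced bipartition of it has $e(A)+e(A^c)\ge n^2/16$ (the minimum is attained by putting the whole small class on one side), so $D_2^b(K_{n/4,3n/4})=n^2/16>n^2/25$. Indeed, the paper cites Balogh, Clemen and Lidick\'y's sharp result $D_2^b(G)\le n^2/16$ for triangle-free graphs, and halving that only yields $n^2/32$, well short of $n^2/50$. The averaging step discards exactly the asymmetry that the conjecture exploits --- in the bipartite example one side of the partition can be made empty even though the sum cannot be made small --- so any successful argument must work with $\min\{e(A),e(A^c)\}$ directly rather than with $D_2^b$. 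Your ``easy'' cases ($\alpha(G)\ge n/2$, equivalently some vertex of degree at least $n/2$) are fine, and in fact they dispose of the $K_{n/4,3n/4}$ counterexample to your reduction, but the reduction itself should be dropped from the outline.
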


The most recent progress by Razborov\cite{razborov2022} demonstrates that every triangle-free graph $n$ has a subset of vertices of size $n/2$ spanning at most $(27/1024)n^2$ edges. Another related conjecture of Erdős stated that:

\begin{Conj}\cite{erdos1997}
    Let $G$ be a triangle-free graph on $n$ vertices.
    $$
    D_2(G)\leq \frac{n^2}{25}.
    $$
\end{Conj}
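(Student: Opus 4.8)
The plan is to recast the statement as a max-cut lower bound. Since every $2$-partition satisfies $e(A)+e(A^c)+e(A,A^c)=e(G)$, we have $D_2(G)=e(G)-\mathrm{maxcut}(G)$, where $\mathrm{maxcut}(G)=\max_A e(A,A^c)$. Thus the claim $D_2(G)\le n^2/25$ is equivalent to exhibiting a bipartition of $V(G)$ that cuts all but at most $n^2/25$ of the edges. By Mantel's theorem $e(G)\le n^2/4$, so a uniformly random bipartition already gives $\mathbb{E}\bigl(e(A)+e(A^c)\bigr)=e(G)/2\le n^2/8$, hence $D_2(G)\le n^2/8$. The entire difficulty is to sharpen the constant from $1/8$ down to $1/25$, a value attained (conjecturally uniquely) by the balanced blow-up of $C_5$, for which all edges live in five copies of $K_{n/5,n/5}$ and deleting any one copy leaves a bipartite graph.

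First I would exploit the defining structural feature of triangle-freeness: for every edge $uv$ the neighborhoods $N(u)$ and $N(v)$ are disjoint, and each of $N(u),N(v)$ is an independent set. Beginning from a \emph{locally optimal} cut, i.e.\ one stable under moving any single vertex, every vertex sends at least half of its incident edges across the partition. The idea is to amplify this local inequality using the independence of neighborhoods to argue that a dense set of within-part edges is forced into a rigid, $C_5$-like cyclic pattern; any deviation from this pattern should permit a local move that strictly decreases $e(A)+e(A^c)$.

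The most direct route to the \emph{exact} constant is a flag-algebra / semidefinite-programming certificate in the spirit of Razborov's treatment of the sparse-half problem. One passes to the triangle-free graphon limit equipped with a measurable $\{0,1\}$-labeling of the vertex set, expresses the normalized quantity $\bigl(e(G)-e(A,A^c)\bigr)/\binom{n}{2}$ as a limit functional, and seeks a positive-semidefinite combination of Cauchy--Schwarz inequalities among small labeled subgraph densities certifying the bound $2/25$. A stability companion would then show that equality forces the limit object to be the $C_5$ blow-up, which combined with a removal/cleaning step upgrades the asymptotic bound to the stated inequality for finite $n$.

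The step I expect to be the main obstacle is matching the threshold $1/25$ rather than merely beating $1/8$: random rounding and single-vertex local search are both lossy, and because $\mathrm{maxcut}$ is itself a maximization, the quantity $D_2$ is a $\min$--$\max$ that does not linearize cleanly in the flag-algebra formalism. The crux is to handle the inner maximization over labelings, either by fixing an optimal $\{0,1\}$-labeling inside the graphon limit or by dualizing it, while simultaneously certifying that no triangle-free configuration outperforms the $C_5$ blow-up; this coupling of the extremal labeling with the extremal graph structure is where the genuine work lies.
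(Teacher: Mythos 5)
The statement you are asked about is a \emph{conjecture} in this paper (Erd\H{o}s, 1997), not a theorem: the paper offers no proof of it, and it remains open. The paper explicitly records that the best known progress is the bound $n^2/23.5$ of Balogh, Clemen and Lidick\'y, which still falls short of the conjectured $n^2/25$. So there is no ``paper's own proof'' to compare against, and any complete argument you gave would be a new result.

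Your proposal does not constitute such an argument. The reduction $D_2(G)=e(G)-\mathrm{maxcut}(G)$ and the random-bipartition bound $D_2(G)\le e(G)/2\le n^2/8$ are correct but standard, and everything after that is a plan rather than a proof. The local-search amplification step is stated only as an intention (``the idea is to amplify\ldots should permit a local move'') with no inequality actually derived; locally optimal cuts in triangle-free graphs are not known to force a $C_5$-like structure, and no mechanism is given for why deviations from that structure admit improving moves. The flag-algebra route is likewise only sketched, and you yourself identify the central unresolved obstacle: $D_2$ is a min over partitions of a quantity involving a max-cut, and this min--max does not linearize into subgraph densities, so no SDP certificate is exhibited or even shown to exist. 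Indeed, this is exactly where the state of the art is stuck at $1/23.5$. In short, the gap between $1/8$ and $1/25$ --- which you correctly flag as ``where the genuine work lies'' --- is left entirely open, so the proposal cannot be accepted as a proof of the statement.
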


The most recent progress by Balogh, Clemen and Lidický\cite{balogh2021} demonstrates that every triangle-free graph $G$ on $n$ vertices can be made bipartite by deleting at most $n^2/23.5$ edges. Several years later, they also raised and solved 2 related problems: for every triangle-free graph on $n$ vertices, where $n$ is sufficiently large, $D_2^b(G)\leq n^2/16$ and $D_{2,\infty}^b(G)\leq n^2/18$. In addition, \cite{bollobas1999},\cite{scott2005},\cite{lee2013} investigate the related 2-partition problem concerning a fixed number of graph edges and solve some of these problems.

For $K_4$-free graphs, significant work has been done on related problems.  Sudakov \cite{sudakov2007} proved that every $K_4$-free graph $G$ on $n$ vertices can be made bipartite by deleting at most $n^2/9$ edges. Recently, Reiher \cite{reiher2022}, building on the work of Liu and Ma \cite{liu2022}, proved that every $K_4$-free graph contains a set of size $n/2$ spanning at most $n^2/18$, thereby solving the conjecture proposed in \cite{erdos1994}. Additionally, Balogh, Clemen and Lidický\cite{balogh2023} recently inquired about the upper bounds of $D_2^b(G)$ and $D_{2,\infty}^b(G)$ for $K_4$-free graph, and proposed the following conjectures:

\begin{Conj}
    Let $G$ be a $K_4$-free graph on $n$ vertices. If $n$ is even, then
    $$D_2^b(G)\leq \frac{n^2}{9}.$$
\end{Conj}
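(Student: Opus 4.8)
The plan is to start from the optimal (not necessarily balanced) partition supplied by Sudakov's theorem and then repair the balance. Recall that $D_2(G)$ equals the minimum number of edges whose deletion makes $G$ bipartite, so Sudakov's bound gives a set $F\subseteq E(G)$ with $|F|\le n^2/9$ together with a bipartition $V(G)=X\cup Y$ of $G-F$. Since $G-F$ has no edge inside $X$ or inside $Y$, every edge of $G$ lying inside a class belongs to $F$, whence $e(X)+e(Y)\le |F|\le n^2/9$. If this bipartition happened to satisfy $|X|=|Y|$ we would already be done, so the entire difficulty is that $(X,Y)$ may be unbalanced. The extremal instance to keep in mind is the Tur\'an graph $K_{n/3,n/3,n/3}$: there the min-bad-edge bipartition is the very unbalanced $(P_1,\,P_2\cup P_3)$, yet it can be rebalanced into two halves spanning exactly $n^2/9$ internal edges \emph{at no extra cost}, which is the behaviour I want to reproduce in general.

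So suppose $|X|=n/2+t>|Y|$, and write, for $u\in X$, $a(u)=|N(u)\cap X|$ and $b(u)=|N(u)\cap Y|$. Moving a set $S\subseteq X$ of size $t$ into $Y$ changes the number of bad edges by exactly $\sum_{u\in S}\bigl(b(u)-a(u)\bigr)+2e(S)$, and by local optimality of $(X,Y)$ each term $b(u)-a(u)$ is nonnegative. The naive estimate --- average $b(u)-a(u)$ over $X$ equals $(e(X,Y)-2e(X))/|X|$, then take the $t$ cheapest vertices --- contributes an extra term of the same order and gives only the too-weak total $2n^2/9$. The real step is therefore to show that a $K_4$-free graph with a nearly tight deletion set $F$ must contain many \emph{balanced} vertices on the large side, i.e.\ vertices $u$ with $b(u)=a(u)$ that moreover span few edges among themselves, exactly as every vertex of $P_2\cup P_3$ does in the Tur\'an example; moving such a set costs nothing. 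Quantifying this will use $K_4$-freeness through the fact that each neighbourhood $N(w)$ is triangle-free, so by Mantel's theorem $e(G[N(w)])\le |N(w)|^2/4$, which limits how clustered the internal edges on the large side can be.

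An alternative, and probably cleaner, route avoids rebalancing altogether by optimizing over balanced partitions from the outset. I would take a balanced partition $(X,Y)$ minimizing $e(X)+e(Y)$ and exploit its stability under swaps: for every $u\in X$ and $v\in Y$, exchanging $u$ and $v$ cannot decrease the count, which forces $\bigl(b(u)-a(u)\bigr)+\bigl(c(v)-d(v)\bigr)\ge 2\cdot\mathbf{1}[u\sim v]$, where $c(v)=|N(v)\cap X|$ and $d(v)=|N(v)\cap Y|$. Summing these inequalities against the adjacency structure and inserting the Mantel bound on each neighbourhood to control the within-class edges is the mechanism by which I would force $e(X)+e(Y)\le n^2/9$. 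The main obstacle in either route is the same: the constant $n^2/9$ is tight and attained by $K_{n/3,n/3,n/3}$, so the argument has essentially no slack, and the delicate point is to combine the (max-cut type) local-optimality inequalities with the triangle-free-neighbourhood constraint tightly enough that the balance requirement costs nothing beyond the unbalanced optimum, rather than the factor $2$ that the crude bookkeeping gives.
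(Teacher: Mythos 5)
The statement you are attempting is Conjecture 1.3 of the paper, due to Balogh, Clemen and Lidick\'y; the paper does not prove it and only records that it is known for regular graphs (via Reiher) and for $3$-partite graphs. So there is no paper proof to compare against, and the real question is whether your argument settles the problem. It does not: both of your routes stop exactly at the step that carries all the difficulty. In the first route, after Sudakov's theorem hands you an unbalanced $(X,Y)$ with $e(X)+e(Y)\le n^2/9$, you correctly compute that moving $S\subseteq X$ costs $\sum_{u\in S}\bigl(b(u)-a(u)\bigr)+2e(S)$ and correctly observe that the averaging estimate gives only $2n^2/9$; but the claim that a near-tight deletion set forces ``many balanced vertices spanning few edges among themselves'' is stated as something you \emph{would} show and \emph{would} quantify, with no argument supplied. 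That claim is precisely the open content of the conjecture: nothing in $K_4$-freeness obviously prevents the slack $b(u)-a(u)$ from being spread evenly over $X$, nor prevents every cheap $t$-subset from spanning $\Theta(n^2)$ internal edges, and Mantel's bound on neighbourhoods is invoked only as a hope, not applied.

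The second route has the same gap, and there it can be made quantitative: summing your swap inequality $\bigl(b(u)-a(u)\bigr)+\bigl(c(v)-d(v)\bigr)\ge 2\cdot\mathbf{1}[u\sim v]$ over all pairs $(u,v)\in X\times Y$ gives $(n-2)\,e(X,Y)\ge n\bigl(e(X)+e(Y)\bigr)$, hence $e(X)+e(Y)\le e(X,Y)$, which together with Tur\'an's bound $e(G)\le n^2/3$ yields only $e(X)+e(Y)\le n^2/6$. Closing the gap from $n^2/6$ to the tight $n^2/9$ is exactly where the triangle-freeness of neighbourhoods must enter in an essential way, and you give no indication of how the Mantel inequalities are to be ``summed against the adjacency structure'' to achieve this. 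In short: the reductions are sound and the bookkeeping is correct, but the theorem-bearing step is absent in both routes, so what you have is a plan for attacking an open conjecture rather than a proof of it.
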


\begin{Conj}
    Let $G$ be a $K_4$-free graph on $n$ vertices. If $n$ is even, then
    $$D_{2,\infty}^b(G)\leq \frac{n^2}{16}.$$
\end{Conj}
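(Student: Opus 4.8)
The abstract signals that the conjecture just stated, identified there as Conjecture~2.6 of \cite{balogh2023}, is in fact \emph{false}, so the plan is not to establish $D_{2,\infty}^b(G)\le n^2/16$ but to exhibit a $K_4$-free graph on an even number of vertices that violates it. The natural place to search is among blow-ups of a small fixed graph, since for a blow-up every balanced partition is determined by how much of each blow-up class it uses, which turns the minimax into a tractable continuous optimization on the base graph. To beat the constant $1/16$ I would pair a large independent set, so that one side of the partition can be made almost edge-free, with an odd cycle, which cannot be split into two independent sets and therefore forces irreducible internal edges on whichever side carries it. This points to the base graph $H = I_7\vee C_5$ and to $G = H^{2m}$ on $N = 24m$ vertices.

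First I would verify that $G$ is $K_4$-free. A clique of $H$ meets the independent set $I_7$ in at most one vertex and the triangle-free cycle $C_5$ in at most an edge, so $\omega(H)=1+2=3$; and since the classes of a blow-up are independent, any clique of $H^{2m}$ uses at most one vertex per class and projects to a clique of $H$, whence $\omega(G)=\omega(H)=3$. Thus $G$ is $K_4$-free and has $N=24m$ vertices, which is even.

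Next I would reduce the problem to a finite optimization. For a balanced partition $V(G)=A\cup A^c$, let $k_v$ be the number of vertices of the class of $v\in V(H)$ placed in $A$, and set $x_v=k_v/(2m)$; since two classes joined by an edge of $H$ induce a complete bipartite graph, the counts are given \emph{exactly} by
$$e(A)=(2m)^2\sum_{uv\in E(H)}x_u x_v,\qquad e(A^c)=(2m)^2\sum_{uv\in E(H)}(1-x_u)(1-x_v),$$
subject to the balance constraint $\sum_v x_v=6$. Because the join contributes $\bigl(\sum_{i\in I_7}x_i\bigr)\bigl(\sum_{j\in C_5}x_j\bigr)$, which depends only on the two partial sums, I may assume all seven $I_7$-coordinates equal. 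Writing $S=\sum_{j\in C_5}x_j$ and $C=\sum_{i=1}^{5}x_{v_i}x_{v_{i+1}}$ (indices mod $5$) and measuring edge counts in units of $4m^2$, the two sides become
$$f=6S-S^2+C,\qquad g=10+2S-S^2+C,\qquad g-f=10-4S.$$
For fixed $S$ the objective $\max\{f,g\}$ is increasing in $C$, so one minimizes $C$; the minimum of $C$ on the cycle is governed by $\alpha(C_5)=2$, equal to $0$ for $S\le 2$ and to $S-2$ for $2\le S\le 3$ (load an independent pair fully and place the surplus on a vertex with a single loaded neighbour).

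The heart of the argument, which I expect to be the main obstacle, is to show that this one-parameter minimax is globally minimized at $S=5/2$, $C=1/2$, where $f=g=37/4$: on $[0,2]$ one has $\max=g=10+2S-S^2\ge 10$, on $[2,5/2]$ one has $\max=g=8+3S-S^2$ decreasing to $37/4$ at $S=5/2$, and the range $[5/2,5]$ mirrors these by the complementation symmetry $x\mapsto 1-x$. The delicate point is ruling out that some asymmetric splitting of the five $C_5$-classes beats the symmetric independent-set configuration; this is exactly what the collapse of the $I_7$-coordinates together with the cycle computation of $\min C$ is designed to settle, and the numbers $7$ and $C_5$ are tuned so that the answer lands just above $1/16$. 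Since the optimum is attained at the integer point $x_{v_1}=x_{v_3}=1$, $x_{v_5}=\tfrac12$, $x_{v_2}=x_{v_4}=0$ with every $I_7$-coordinate equal to $\tfrac12$, the evaluation is exact rather than asymptotic, and every balanced partition satisfies
$$\max\{e(A),e(A^c)\}\ge 4m^2\cdot\tfrac{37}{4}=37m^2,\qquad\text{so}\qquad D_{2,\infty}^b(G)=37m^2=\tfrac{37}{576}N^2>\tfrac{36}{576}N^2=\frac{N^2}{16},$$
contradicting the stated bound and disproving the conjecture.
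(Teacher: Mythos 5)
Your overall strategy is the paper's own: the statement is Conjecture~1.4, which the paper refutes via Theorem~1.6 using exactly the graph $H=I_7\vee C_5$ blown up, and your reduction to the normalized quantities $S$ and $C$, the collapse of the $I_7$-coordinates, the complementation symmetry, and the final value $\tfrac{37}{576}N^2$ all agree with the paper's computation of $D_{2,\infty}^b(H^{2n})=37n^2$. The verification of $K_4$-freeness, the exactness of the blow-up edge count, and the arithmetic on $[0,2]$ and $[2,5/2]$ are all correct.

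However, there is a genuine gap at precisely the point you flag as ``the main obstacle'': you assert that $\min C=0$ for $S\le 2$ and $\min C=S-2$ for $2\le S\le 3$, but you only exhibit the configuration attaining these values (an independent pair loaded fully plus surplus on one further vertex); you never prove the matching lower bound $\sum_{i=1}^{5}x_ix_{i+1}\ge S-2$ over all $x\in[0,1]^5$ with $\sum x_i=S$. This is not a routine convexity or single-inequality step --- the naive bound $x_ix_{i+1}\ge x_i+x_{i+1}-1$ summed over a perfect matching of $C_5$ minus a vertex only yields $C\ge S-x_j-2$ and leaves a residual case analysis --- and it is exactly the content of the paper's Lemma~2.2, which occupies most of Section~2: a six-case rearrangement argument showing $S_a\ge T(b)$ for the ordered weights, followed by a discrete shifting algorithm (Table~1) proving that the extremal weight vector is $b^*=(n,n,q,0,0)$-type, together with Remark~2.3 pinning down the equality case. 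Without that lower bound your argument establishes only $D_{2,\infty}^b(H^{2m})\le 37m^2$, which is the wrong direction for disproving the conjecture; the refutation needs the inequality $\max\{e(A),e(A^c)\}\ge 37m^2$ for \emph{every} balanced partition, and that rests entirely on the unproven claim about $\min C$. To complete the proof you would need to supply this lemma (or an equivalent quadratic-programming argument on the weighted $C_5$).
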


Reiher's work\cite{reiher2022} mentioned in the previous paragraph implies that Conjectures 1.3 and 1.4 holds for regular graphs. Furthermore, Balogh, Clemen and Lidický\cite{balogh2023} demonstrated that Conjectures 1.3 and 1.4 holds for 3-partite graph.
\begin{Th}
Let $G$ be a 3-partite graph on $n$ vertices. If $n$ is even,
        $$D_{2,\infty}^b(G)\leq \frac{n^2}{16}.$$
The equality holds when $G$ is a complete 3-partite graph with class sizes $n/2,n/4$ and $n/4$.
\end{Th}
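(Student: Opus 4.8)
The plan is to prove the upper bound by exhibiting, for any $3$-partite $G$, one explicit balanced bipartition in which each side induces a bipartite subgraph, and then to verify separately that the complete tripartite graph $K_{n/2,n/4,n/4}$ admits no better partition, so that the bound is attained.

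Let $V(G) = V_1 \cup V_2 \cup V_3$ be a $3$-partition with $n_i = |V_i|$ and $n_1 + n_2 + n_3 = n$. The key observation is that, since each $V_i$ is independent, a side $A$ of the bipartition meeting \emph{at most two} classes induces a bipartite subgraph: if $A \subseteq V_q \cup V_r$ then $e(A) \le |A \cap V_q|\cdot|A \cap V_r|$. First I would note that at most one class can have size exceeding $n/2$, since two such classes would contain more than $n$ vertices; hence there are two classes, say $V_p$ and $V_q$, with $n_p, n_q \le n/2$. Writing $V_r$ for the third class, I would set $A = V_q \cup S$ and $A^c = V_p \cup (V_r \setminus S)$ for a subset $S \subseteq V_r$ of size $n/2 - n_q$; this size lies in $[0, n_r]$ precisely because $n_q \le n/2$ and $n_p \le n/2$ (the latter giving $n/2 - n_q \le n_r$), so the partition is legitimate and balanced. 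Since $A \subseteq V_q \cup V_r$ and $A^c \subseteq V_p \cup V_r$, both sides are bipartite, and a short computation gives $e(A) \le n_q(n/2 - n_q) \le n^2/16$ and $e(A^c) \le n_p(n/2 - n_p) \le n^2/16$, each maximized when the relevant class has size $n/4$. This yields $D_{2,\infty}^b(G) \le n^2/16$.

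For the equality statement I would specialize to $K_{n/2,n/4,n/4}$ and prove the matching lower bound: every balanced bipartition has $\max\{e(A), e(A^c)\} \ge n^2/16$. Write $n = 4m$, so the classes have sizes $2m, m, m$ and the target is $m^2$. With $a_i = |A \cap V_i|$, $b_i = n_i - a_i$ and $s = a_2 + a_3$, the constraint $\sum a_i = 2m$ forces $a_1 = 2m - s$ and $b_1 = s$, so both $e(A)$ and $e(A^c)$ share the term $(2m-s)s$ and differ only by $a_2 a_3$ versus $b_2 b_3$; the identity $b_2 b_3 - a_2 a_3 = m(m - s)$ then tells me which side is larger. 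For $s \le m$ the larger side equals $(2m-s)s + a_2a_3 + m(m-s) = m^2 + s(m-s) + a_2a_3 \ge m^2$, and for $s \ge m$ I would use $a_2 a_3 \ge m(s - m)$ (the product is smallest at the boundary $a_2 = m$) to get the larger side at least $3ms - s^2 - m^2 \ge m^2$, the last step being $(s-m)(s-2m) \le 0$ on $[m, 2m]$. Taking $A = V_1$ shows the bound is attained, so $D_{2,\infty}^b(K_{n/2,n/4,n/4}) = n^2/16$.

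The construction for the upper bound is essentially forced once one sees the ``meets at most two classes'' principle, so I expect the only real content to be the lower bound for the extremal graph. The hard part there will be organizing the optimization so that it genuinely ranges over \emph{all} balanced partitions rather than over a convenient subfamily; the reduction to the single parameter $s$, together with the boundary estimate $a_2 a_3 \ge m(s-m)$, is what makes the two cases close cleanly, and I would want to double-check the integrality of $S$ (automatic since $n$ is even and the $n_i$ are integers).
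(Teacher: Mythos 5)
The paper does not actually prove this statement: Theorem 1.5 is quoted from Balogh, Clemen and Lidick\'y \cite{balogh2023} and used as a black box (e.g.\ in Section 5), so there is no in-paper proof to compare against. Judged on its own, your argument is correct and complete. The upper bound is right: two of the three classes, say $V_p,V_q$, have size at most $n/2$, the set $S\subseteq V_r$ of size $n/2-n_q$ exists exactly because $n_p\le n/2$, and each side of the resulting partition meets only two classes, giving $e(A)\le n_q(n/2-n_q)\le n^2/16$ and likewise for $A^c$. The lower bound for $K_{2m,m,m}$ also checks out: with $s=a_2+a_3$ one indeed gets $e(A)=(2m-s)s+a_2a_3$ and $e(A^c)=(2m-s)s+b_2b_3$ with $b_2b_3-a_2a_3=m(m-s)$, the case $s\le m$ gives $e(A^c)=m^2+s(m-s)+a_2a_3\ge m^2$, and for $s\ge m$ the boundary estimate $a_2a_3\ge m(s-m)$ (valid since $a_2a_3$ is concave in $a_2$ on the feasible interval $[s-m,m]$ and equals $m(s-m)$ at both endpoints) yields $e(A)\ge 3ms-s^2-m^2\ge m^2$ via $(s-m)(s-2m)\le 0$. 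Taking $A=V_1$ attains $m^2=n^2/16$, so equality holds for $K_{n/2,n/4,n/4}$ as claimed. This is a clean, self-contained proof of the cited result.
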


The main result of this paper is to provide a family of counterexamples to Conjecture 1.4, as well as to establish an upper bound on $D_{2,\infty}^b(G)$ for $K_4$-free graphs. Additionally, we present an upper bound on $D_{2,\infty}^b(G)$ for the join of an independent set and a triangle-free graph.

\begin{Th}
Let $H=I_7\vee C_5$, where $I_7$ is a 7-vertex independent set and $C_5$ is a 5-cycle.
    \begin{equation}
        \begin{aligned}
            D_{2,\infty}^b(H^{2n})=\frac{37}{24^2}|H^{2n}|^2>\frac{|H^{2n}|^2}{16}.
        \end{aligned}
    \end{equation} 
    where $H^{2n}$ denotes the $2n$-blow-up of $H$.
\end{Th}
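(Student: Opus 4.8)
The plan is to reduce the whole problem to a low-dimensional optimization, exploiting that $H^{2n}$ is a blow-up. Write $V(H)=\{u_1,\dots,u_7\}\cup\{w_1,\dots,w_5\}$ with the $u_i$ forming $I_7$ and the $w_j$ forming $C_5$ (indices mod $5$), and let $V_v$ denote the class of size $2n$ replacing $v$. For any partition $V(H^{2n})=A\cup A^c$ put $a_v=|A\cap V_v|$. Since $V_u$ and $V_v$ span a complete bipartite graph exactly when $uv\in E(H)$, and each $V_v$ is independent, one has $e(A)=\sum_{uv\in E(H)}a_ua_v$ and $e(A^c)=\sum_{uv\in E(H)}(2n-a_u)(2n-a_v)$, \emph{independently of which vertices are chosen}. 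Setting $x_v=a_v/(2n)\in[0,1]$, balance becomes $\sum_v x_v=6$ and the objective becomes $4n^2\max\{f(x),g(x)\}$, where $f(x)=\sum_{uv\in E(H)}x_ux_v$ and $g(x)=\sum_{uv\in E(H)}(1-x_u)(1-x_v)$. As the integral points form a subset of the polytope $[0,1]^{12}\cap\{\sum x_v=6\}$, it suffices to show the continuous minimum of $\max\{f,g\}$ equals $37/4$ and to exhibit one integral configuration attaining it.

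The second step collapses the dimension using the structure of $H$. Because $I_7$ has no internal edges and every $u_i$ is joined to every $w_j$, the variables on $I_7$ enter $f,g$ only through $P:=\sum_i x_{u_i}$, while those on $C_5$ enter through $Q:=\sum_j x_{w_j}$ and $C:=\sum_j x_{w_j}x_{w_{j+1}}$; the constraint reads $P+Q=6$ (so $P=6-Q\in[1,6]$ is automatically feasible). A direct expansion gives $f=C+(6-Q)Q$ and $g=C+10+2Q-Q^2$, so that $g-f=10-4Q$ regardless of $C$. Hence $\max\{f,g\}=g$ for $Q\le 5/2$ and $\max\{f,g\}=f$ for $Q\ge 5/2$, and in either branch the maximum is increasing in $C$; the minimization therefore forces $C$ down to its least possible value for the given $Q$.

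The key lemma is to determine $C_{\min}(Q):=\min\{\sum_j x_{w_j}x_{w_{j+1}}:\ x_{w_j}\in[0,1],\ \sum_j x_{w_j}=Q\}$. Since $\sum_j x_{w_j}x_{w_{j+1}}$ is multilinear, its minimum over $[0,1]^5\cap\{\sum x_{w_j}=Q\}$ is attained at a vertex, i.e.\ at a point with at most one non-integral coordinate; combining this with the fact that the independence number of $C_5$ is $2$ (so at most two classes can carry full mass without creating an adjacent product), I expect $C_{\min}(Q)=0$ on $[0,2]$, $C_{\min}(Q)=Q-2$ on $[2,3]$, and $C_{\min}(Q)=2Q-5$ on $[3,5]$, the last range also following from the complementation identity $C_{\min}(5-Q)=C_{\min}(Q)+5-2Q$. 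Substituting $C=C_{\min}(Q)$ reduces everything to minimizing explicit quadratics in the single variable $Q\in[0,5]$: on $Q\le 5/2$ the function $g=C_{\min}(Q)+11-(Q-1)^2$ is at least $37/4$ with equality only at $Q=5/2$, and on $Q\ge 5/2$ the function $f=C_{\min}(Q)+6Q-Q^2$ is likewise at least $37/4$ with equality at $Q=5/2$. This yields the lower bound $D_{2,\infty}^b(H^{2n})\ge 37n^2$.

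For the matching upper bound I would exhibit the integral configuration $Q=5/2,\ P=7/2$ realized by $(a_{w_1},\dots,a_{w_5})=(2n,0,2n,n,0)$ on $C_5$ and $(a_{u_1},\dots,a_{u_7})=(2n,2n,2n,n,0,0,0)$ on $I_7$; these counts are integers (as $2n\cdot\tfrac12=n$), sum to $12n$, and a short check gives $e(A)=e(A^c)=37n^2$, so the continuous minimum $37/4$ is attained integrally. Together with $|H^{2n}|=24n$ this gives $D_{2,\infty}^b(H^{2n})=37n^2=\tfrac{37}{24^2}|H^{2n}|^2>\tfrac{1}{16}|H^{2n}|^2$, which disproves Conjecture 1.4. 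The main obstacle is the key lemma together with the subsequent global minimization: one must control $C_{\min}(Q)$ on the \emph{entire} range $Q\in[0,5]$ rather than only near $Q=5/2$, since the crude bound $\max\{f,g\}\ge(f+g)/2$ is far from tight when $f\neq g$ (e.g.\ at $Q=2$ one has $(f+g)/2=9<37/4$ yet $\max\{f,g\}=10$). The delicacy is precisely that the minimum of $\max\{f,g\}$ sits at the crossover $Q=5/2$ where the two branches coincide, so the argument cannot be shortcut through the average and must track each branch separately.
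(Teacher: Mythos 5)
Your proposal is correct in substance and follows the same basic strategy as the paper: since $H^{2n}$ is a blow-up, everything reduces to the intersection sizes of $A$ with the twelve classes, the cross terms with $I_7$ depend only on $Q=\sum_j x_{w_j}$, and the whole problem hinges on the minimum of $\sum_j x_{w_j}x_{w_{j+1}}$ for fixed $Q$ followed by a one-variable optimization. Your $C_{\min}(Q)$ is exactly the normalized version of the paper's $f(n,p,q)$ from Lemma 2.2 ($C_{\min}(2+q/n)=q/n$, $C_{\min}(p+q/n)=2(p+q/n)-5$ for $p\in\{3,4\}$), and your branch analysis at the crossover $Q=5/2$ is equivalent to the paper's WLOG reduction to $|A\cap V(C_5^{2n})|\ge 5n$ followed by the case split on $p$; your extremal configuration is a relabeling of theirs. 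The one genuine difference is how the key lemma is established: the paper uses a rearrangement inequality over the $12$ orderings of the class sizes plus a discrete mass-shifting algorithm, whereas you invoke "multilinear, hence minimized at a vertex of the polytope." That step is the soft spot of your write-up: multilinearity gives extrema at vertices of a \emph{box}, but your feasible region is a box cut by the hyperplane $\sum_j x_{w_j}=Q$, so you must additionally observe that along any sum-preserving direction $e_i-e_j$ the objective is affine (if $w_iw_j\notin E(C_5)$) or concave with leading coefficient $-1$ (if $w_iw_j\in E(C_5)$), so the minimum can always be pushed to a face and iterated down to a vertex with at most one fractional coordinate; after that, the vertex values must be checked by hand over the few placements of the $1$'s on $C_5$ (using $\alpha(C_5)=2$). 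With that argument supplied, your claimed formula for $C_{\min}$ is correct and the rest of the computation ($g=C+11-(Q-1)^2$ on $Q\le 5/2$, $f=C+6Q-Q^2$ on $Q\ge 5/2$, minimum $37/4$ at $Q=5/2$, $C=1/2$) checks out, yielding $D_{2,\infty}^b(H^{2n})=37n^2=\tfrac{37}{576}|H^{2n}|^2$.
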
 

\begin{remark}
    Theorem 1.6 disproves Conjecture 1.4.
\end{remark}
\begin{Th}
Let $G$ be a $K_4$-free graph on $n$ vertices. If $n$ is even and greater than $10^5$,
    \begin{equation}
        D_{2,\infty}^b(G)< 0.074n^2.
    \end{equation}
\end{Th}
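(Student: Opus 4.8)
The plan is to split the argument according to the edge density of $G$, using Turán's theorem ($e(G)\le n^2/3$) as the benchmark, and to treat the sparse and dense regimes by completely different means; the constant $0.074$ is then the worst case over the two regimes.

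In the \emph{sparse regime}, say $e(G)\le c\,n^2$ for a threshold $c$ slightly below $0.3$, I would simply take a uniformly random balanced partition $V(G)=A\cup A^c$ with $|A|=|A^c|=n/2$. Each edge lies inside $A$ with probability $\frac{(n/2)(n/2-1)}{n(n-1)}\to\tfrac14$, and likewise inside $A^c$, so $\mathbb{E}[e(A)]=\mathbb{E}[e(A^c)]=\bigl(\tfrac14+o(1)\bigr)e(G)$. Exposing the vertices one at a time gives a martingale whose differences are at most $\Delta(G)\le n$, so Azuma's inequality yields $e(A),e(A^c)\le e(G)/4+o(n^2)$ with high probability, and hence $\max\{e(A),e(A^c)\}\le e(G)/4+o(n^2)<0.074\,n^2$ once $c<0.296$ and $n$ is large. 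This is one place where the hypothesis $n>10^5$ enters: it absorbs the $o(n^2)$ fluctuation and all rounding of $n/2,\ n/4$.

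In the \emph{dense regime} $e(G)>c\,n^2$ the graph is within $t:=n^2/3-e(G)<(\tfrac13-c)n^2$ edges of the Turán extremal number, so by the stability version of Turán's theorem one can delete at most $t$ edges to obtain a $3$-partite graph $G'$ with parts $V_1,V_2,V_3$ whose cross-part edges dominate. I would analyse $G'$ by first computing $D_{2,\infty}^b$ for the complete tripartite graph $K_{a,b,c}$ ($a+b+c=n$) exactly: distributing $p_i$ vertices of $V_i$ into $A$ under $\sum p_i=n/2$ makes $e(A)=\sum_{i<j}p_ip_j$ and $e(A^c)=\sum_{i<j}(|V_i|-p_i)(|V_j|-p_j)$, and minimising the maximum of these two symmetric functions is a small optimisation whose value never exceeds the $n^2/16$ of Theorem 1.5. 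Missing cross-part edges only help, so the genuine contribution of $G'$ under the optimal split is at most this tripartite value; the $\le t$ deleted (within-part) edges are then re-inserted, and since the within-part graph is itself a sparse $K_4$-free graph, the split of each $V_i$ (which, for the cross-part count, depends only on $p_i$ and not on which vertices are chosen) can be randomised so that these edges divide almost evenly, contributing at most $t/2+o(n^2)$ to each side.

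The delicate point, and the step I expect to be the main obstacle, is the \emph{book-keeping of constants across the threshold}. In the sparse regime the bound degrades like $e(G)/4$ as $e(G)$ increases, while in the dense regime it is roughly (the tripartite value of the optimal $K_{a,b,c}$ of density $e(G)/n^2$) plus $t/2$, a quantity that instead \emph{grows} as $e(G)$ decreases, because the parts become unbalanced and $t$ increases. One must verify that both expressions stay below $0.074\,n^2$ and overlap on an interval of densities; the tightest point is around density $5/16$, where the worst tripartite configuration $K_{n/2,n/4,n/4}$ contributes $n^2/16$ and the defect budget $t/2$ must still leave room under $0.074\,n^2$. Carrying out this tripartite optimisation precisely, and ensuring the defect edges really split evenly (rather than piling onto the heavier side) through the concentration of the random refinement inside the tripartition, is exactly what fixes the constant $0.074$ and the threshold $c$, and is what forces $n$ to be large.
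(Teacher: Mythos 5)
Your sparse regime is essentially the paper's: the paper invokes a theorem of Xu--Yan--Zhang giving a balanced bipartition with $\max\{e(A),e(A^c)\}\le (m+\Delta(G)-\delta(G))/4\le m/4+n/4$, which is exactly your $e(G)/4+o(n^2)$ bound and disposes of all graphs with $e(G)\le(0.296-\varepsilon)n^2$. The gap is in your dense regime, and it is quantitative: \emph{the two regimes do not overlap}. Even granting you the strongest quantitative stability statement (F\"uredi's theorem: a $K_4$-free graph with $e(G)=\mathrm{ex}(n,K_4)-t$ can be made $3$-partite by deleting at most $t$ edges), your dense bound is at best $n^2/16+t/2$, which is below $0.074n^2$ only when $t<0.023n^2$, i.e.\ $e(G)>0.310n^2$; the sparse bound stops working at $e(G)=0.296n^2$. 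Densities in $(0.296,\,0.310)$ are covered by neither argument, and this window is not an artifact of loose constants: $K_{n/2,n/4,n/4}$ has density exactly $5/16=0.3125$, so near-tripartite graphs slightly sparser than this sit precisely where your tripartite contribution is still essentially $n^2/16$ while the defect budget $t$ has already exceeded what $0.074n^2$ allows. The $t/2$ is itself unjustified: the split realizing $n^2/16$ for $K_{n/2,n/4,n/4}$ is $A=V_1$, $A^c=V_2\cup V_3$, which sends entire parts to one side, so defect edges concentrated inside $V_1$ contribute $t$, not $t/2$, to $e(A)$; and splitting each part in half to fix this raises the cross-part count to $5n^2/64>0.078n^2$. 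So the "delicate book-keeping" you flag is not delicate but broken.

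The paper takes a genuinely different route in the dense regime and never uses tripartite stability. From $e(G)>0.2959n^2$ it extracts an edge $v_1v_2$ with $d(v_1)+d(v_2)\ge 4e(G)/n$, exploits that $N(v_1)$ and $N(v_2)$ are triangle-free and $N(v_1)\cap N(v_2)$ is an independent set (whence $\alpha(G)\ge 4e(G)/n-n\ge 0.18n$), and constructs the balanced partition directly from $N(v_1)\setminus N(v_2)$, $N(v_2)\setminus N(v_1)$ and $N(v_1)\cap N(v_2)$, via a case analysis on their sizes and on whether $\alpha(G)\ge 0.28n$, using Tur\'an-type bounds on the triangle-free pieces to control each side. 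To salvage your approach you would need a density-dependent refinement of the tripartite bound of Theorem 1.5 (showing that tripartite graphs of density near $0.3$ admit balanced bipartitions well below $n^2/16$) together with control of how the defect edges interact with the optimal tripartite split; as written, the argument fails on an open interval of densities.
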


\begin{Th}
Let $G$ be the join of an independent set and a triangle-free set, where $|G|=n$. If $n$ is even and large enough,
    \begin{equation}
    D_{2,\infty}^b(G)\leq \frac{5}{72}n^2.
    \end{equation}
\end{Th}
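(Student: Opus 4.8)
The plan is to fix the decomposition $G = I \vee T$, with $I$ an independent set of size $s$ and $T$ triangle-free of size $t = n-s$, and to exploit that the edges of $G$ split into the $st$ join edges (a complete bipartite graph between $I$ and $T$) and the $e(T) \le t^2/4$ edges inside $T$ (Mantel). For a balanced partition I would record it by $b := |A \cap T|$; forcing $|A| = n/2$ then fixes $|A \cap I| = n/2 - b$, and a short computation gives
$$e(A) = e_T(A \cap T) + \left(\tfrac{n}{2} - b\right)b, \qquad e(A^c) = e_T(A^c \cap T) + \left(s - \tfrac{n}{2} + b\right)(t - b).$$
Writing $\beta = b/n$ and $\gamma = (t-b)/n$ (so $\beta + \gamma = t/n$), the two join contributions collapse to the symmetric forms $(\tfrac12 - \beta)\beta\, n^2$ and $(\tfrac12 - \gamma)\gamma\, n^2$, since $s - \tfrac n2 + b = n(\tfrac12 - \gamma)$. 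This is the structural observation the whole argument hinges on: the independent set can be used purely as ballast to rebalance the two sides, decoupled from the $T$-internal edges.

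First I would dispose of the easy regime: if $T$ is bipartite then $G$ is $3$-partite (with classes $I$ and the two sides of $T$), so Theorem 1.5 already gives $D_{2,\infty}^b(G) \le n^2/16 < \tfrac{5}{72}n^2$. Thus the real difficulty is odd-cyclic (non-bipartite) triangle-free $T$. I would also record why the naive approach fails: splitting $T$ into two equal halves forces (by balance) an equal split of $I$, and even with the sharp triangle-free bound $\max\{e_T(A\cap T), e_T(A^c\cap T)\} \le t^2/18$ of Balogh--Clemen--Lidick\'y this only yields $\max\{e(A),e(A^c)\} \le \tfrac{9}{112}n^2 \approx 0.080 n^2 > \tfrac{5}{72}n^2$. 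A parallel computation using the plain Mantel bounds $b^2/4$ and $(t-b)^2/4$ shows they too force the balanced, and hence suboptimal, choice. Consequently the core of the argument must use a genuinely \emph{unbalanced} split of $T$, with $I$ absorbing the size imbalance.

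The main step is therefore a triangle-free bipartition estimate: for every triangle-free $T$ and every target size $b$, produce $V(T) = T_1 \cup T_2$ with $|T_1| = b$ for which both $e_T(T_1)$ and $e_T(T_2)$ beat the Mantel bounds enough to make an asymmetric split pay off. I expect to obtain this by analysing the extremal near-bipartite / weighted $C_5$-blow-up structure of dense triangle-free graphs (a max-cut together with a stability argument), so that a $T$ far from bipartite admits an unbalanced low-internal-edge cut. Feeding such a bound into the displayed expressions reduces the theorem to minimising $\max\{\phi_1 + (\tfrac12-\beta)\beta,\ \phi_2 + (\tfrac12-\gamma)\gamma\}$ over $\beta + \gamma = t/n$, which I would carry out by equalising the two sides and then maximising over $t/n$, splitting into the cases $s \ge t$ and $s < t$. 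The worst case should sit near $t/n = \tfrac{5}{12}$, matching the extremal graph $I_7 \vee C_5$ of Theorem 1.6, whose value $\tfrac{37}{576}n^2 < \tfrac{5}{72}n^2$ confirms the bound has the right shape.

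The hard part will be this triangle-free bipartition estimate for worst-case $T$: I must control the internal edges of an unbalanced cut uniformly over all triangle-free graphs, including dense ones of small independence number where large sparse parts are not immediately available, precisely the regime in which Mantel is lossy. A secondary nuisance is parity, since $s$ and $t$ need not be even though $n$ is; I would handle this by moving $O(1)$ vertices between the parts, which perturbs every quantity by a lower-order $O(n)$ term absorbed for $n$ large. Once the estimate is in hand, the closing optimisation is a routine but delicate one-variable maximisation that must be verified to top out at exactly $\tfrac{5}{72}$.
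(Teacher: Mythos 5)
Your structural reduction is sound and isolates the right mechanism: the join edges contribute $(\tfrac12-\beta)\beta\,n^2$ and $(\tfrac12-\gamma)\gamma\,n^2$, the independent set serves only as ballast to rebalance an unequal cut of $T$, and a balanced split of $T$ provably cannot reach $\tfrac{5}{72}$ (your $\tfrac{9}{112}$ computation is correct). But the proposal stops exactly where the work begins. Everything rests on an unproved ``triangle-free bipartition estimate'' --- for every triangle-free $T$ and every target size $b$, a cut with $|T_1|=b$ and both $e_T(T_1)$, $e_T(T_2)$ quantitatively below Mantel --- and you only say you \emph{expect} to obtain it from a max-cut plus stability argument. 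There is no precise statement, no constants, and no proof. Stability for triangle-free graphs gives usable structure only near the extremal density; for $T$ of intermediate density (too dense for a raw edge-count argument, too sparse for stability) you have no handle, and that is precisely the regime where the theorem is tight. The closing optimisation is likewise only described, so there is no verification that whatever constants the missing lemma would supply actually close at $\tfrac{5}{72}$.

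For comparison, the paper realises your ``unbalanced cut of $T$'' idea through known make-bipartite bounds rather than a new bipartition lemma: when $H$ is dense it deletes at most $|H|^2/25$ edges (Theorem 3.7) or $e(H)-4e(H)^2/|H|^2$ edges (Theorem 3.8) to make $H$ bipartite, hence $G$ tripartite, and then invokes the tight $3$-partite bound $n^2/16$ of Theorem 1.5 --- that bound already encodes the optimal unbalanced $(n/2,n/4,n/4)$ cut you are after. This is wrapped in a case analysis on $\alpha=|I|/n$: very large $I$ is handled by splitting $I$ across both sides via Lemma 3.3, small $I$ by extracting a large independent set from a high-degree neighbourhood in $H$ and applying Lemma 3.1, and the low-density regimes by the global edge-count Corollary 3.6. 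If you want to salvage your route, the missing lemma is essentially equivalent to such a $D_2$ bound combined with Theorem 1.5, so you would need to import those results explicitly and then carry out the optimisation over $\alpha$ case by case.
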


The organization of this paper is as follows. In section 2, we prove Theorem 1.6. In section 3, we present some preliminaries for Theorem 1.8 and Theorem 1.9. In section 4 we complete the proof of Theorem 1.8, by considering two cases regrading the independent number. In section 5, we prove Theorem 1.9.

\section{The lower bound for the maximum value of $D_{2,\infty}^b(G)$}    

In this section, we will prove Theorem 1.6 and provide a lower bound on the maximum value of $D_{2,\infty}^b(G)$ for $K_4$-free $G$. 

\begin{Prop}
Let $H=I_7\vee C_5$, where $I_7$ is a 7-vertex independent set and $C_5$ is a 5-cycle.
$$D_{2,\infty}^b(H)= \frac{5}{72}|H|^2.$$
    
\end{Prop}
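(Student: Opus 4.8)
The plan is to reduce the statement to a small finite optimization. Since $|H|=7+5=12$, the target value is $\frac{5}{72}\cdot 144=10$, so I must show that every balanced bipartition $V(H)=A\cup A^c$ with $|A|=|A^c|=6$ satisfies $\max\{e(A),e(A^c)\}\ge 10$, and that some bipartition attains equality. The first step is to parametrize partitions by how the two sides $I_7$ and $C_5$ are split. Write $a$ for the number of $I_7$-vertices placed in $A$ and let $S_A\subseteq V(C_5)$ be the set of cycle vertices in $A$, so that $A$ contains $a$ independent vertices and $b:=|S_A|$ cycle vertices with $a+b=6$, forcing $1\le a\le 6$ and $b=6-a$. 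Because $H$ is a join, every $I_7$-vertex in $A$ is adjacent to every $C_5$-vertex in $A$ while $I_7$ spans no edges, so
$$e(A)=ab+f(S_A),\qquad e(A^c)=(7-a)(5-b)+f\bigl(V(C_5)\setminus S_A\bigr),$$
where $f(S)$ denotes the number of edges of $C_5$ induced on $S$. Thus the whole problem is governed by the product terms together with the induced-edge function $f$.

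The second step records the elementary facts about $f$ on $C_5$: since $\alpha(C_5)=2$, any three cycle vertices induce at least one edge; any four vertices induce exactly three edges (deleting one vertex of $C_5$ leaves a $P_4$); all five induce the five cycle edges; while one or two vertices can be chosen to induce no edge. Feeding the minimal values of $f$ into the two displayed expressions and checking each of the six admissible values of $a$ shows that in every case at least one of $e(A),e(A^c)$ is at least $10$: for $a\in\{1,2,3\}$ the bound comes from $e(A)=ab+f(S_A)$ (giving $10,11,10$ respectively), and for $a\in\{4,5,6\}$ from the symmetric term $e(A^c)$ (giving $10,11,10$). This establishes $D_{2,\infty}^b(H)\ge 10$. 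For the matching upper bound I would exhibit the explicit partition with $a=3$ and $S_A=\{v_1,v_3,v_5\}$, a triple inducing the single $C_5$-edge $v_5v_1$, which gives $e(A)=9+1=10$ and $e(A^c)=4\cdot 2+0=8$, hence $\max=10$.

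The argument is a finite check, so there is no deep obstacle; the only care needed is in the bookkeeping of $f$ and in confirming that the six cases genuinely exhaust all balanced partitions up to the symmetry $A\leftrightarrow A^c$. The one point to watch is that the two induced-edge terms $f(S_A)$ and $f(V(C_5)\setminus S_A)$ cannot be minimized independently, so in each case the lower bound should lean on whichever single part already forces the value $10$, rather than on naively summing the two separate minima of $f$.
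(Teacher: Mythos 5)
Your proof is correct and follows essentially the same route as the paper: both reduce the problem to a finite case check over the number of $C_5$-vertices in each half, using the join structure to count $I_7$--$C_5$ edges as a product and the minimum number of edges induced by $C_5$ on $k$ vertices (namely $0,0,1,3,5$ for $k=1,\dots,5$). Your extremal example ($a=3$, $S_A=\{v_1,v_3,v_5\}$) differs from the paper's choice ($A\supseteq V(C_5)$, giving $e(A)=10$, $e(A^c)=0$), but both attain the value $10$.
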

\begin{figure}
    \centering
    \includegraphics[width=0.5\linewidth]{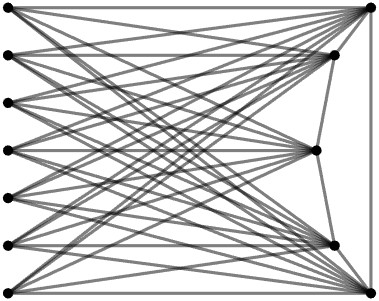}
    \caption{$H:I\vee C_5$}
    \label{fig:enter-label}
\end{figure}
\begin{proof}
    Let $V(H)=A\cup A^c$ be the 2-balanced partition of $H$. Without loss of generality, let $A$ be the vertex set that contains at least half of the vertices in $C_5$. According to the table below, $e(A)\geq 10=5/72\cdot |H|^2$. 
    \begin{table}[H]
        \centering
        \begin{tabular}{|c|c|c|c|}
        \hline
        vertices from $C_5$& min edges from original $C_5$   & edges from original $e(K,C_5)$   & $\min e(A)$   \\ \hline
        3& 1	& $3\times3=9$	& 10\\ \hline
        4& 3	& $4\times2=8$	& 11\\ \hline
        5& 5	& $5\times3=5$	& 10 \\ \hline
        \end{tabular}
        \label{tab:my_label}
    \end{table}
    
When $A$ contains $V(C_5)$, $\max\{e(A),e(A^c)\}=\max(10,0)=10$. Hence, $D_{2,\infty}^b(H)=10=5|H|^2/72$. 
\end{proof}

We can apply the similar method to determine the value of $D_{2,\infty}^b(H^{2n})$, where $H^{2n}$ represents the $2n$-blow-up of $H$. 

\begin{lemma}
Let $C_5^n$ be a graph with vertex set $V(C_5^n)=A_1\cup A_2\cup A_3\cup A_4\cup A_5$ and edges $xy\in C_5^n$ iff $x\in A_i$ and $y\in A_{i+1}$ for some $i\in[5]$, where $A_6:=A_1$ and $|A_i|=n$. Let $G_0$ be a subgraph of $C_5^n$ with $|V(G_0)\cap A_i|=a_i$ for $i\in[5]$ and we denote the values $b_1, b_2, b_3, b_4, b_5$ as the non-increasing rearrangement of $a_1, a_2, a_3, a_4, a_5$.

1. If $T(b)=b_1b_4+b_4b_3+b_3b_2+b_2b_5+b_5b_1$, then $e(G_0)=a_1a_2+a_2a_3+a_3a_4+a_4a_5+a_5a_1\geq T(b)$.

2. If $|V(G_0)|=pn+q$, where $p$ and $q$ are non-negative integers with $p<5$ and $q<n$, then $e(G)\geq f(n,p,q)=\begin{cases}
    0 & p\leq 1\\ qn & p=2 \\ n^2+2qn & p=3\\ 3n^2+2qn & p=4
\end{cases}$. Equality holds if
$b_i=\begin{cases} n & i\leq p\\ q & i=p+1 \\ 0 & \text{other cases}\end{cases}$.
    
\end{lemma}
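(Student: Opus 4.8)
The plan is to reduce both parts to a single optimization over the sorted vector $b=(b_1,\dots,b_5)$, exploiting that $T(b)$ is the minimum of the cyclic adjacency sum over all arrangements of the multiset $\{b_i\}$.

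For Part 1, observe that $e(G_0)=\sum_{i=1}^5 a_i a_{i+1}$ is exactly the sum of products over adjacent pairs of the $5$-cycle for the cyclic order $(a_1,\dots,a_5)$, while $T(b)$ is that same adjacency sum for the cyclic order $(b_1,b_4,b_3,b_2,b_5)$; since both orders use the same five values, it suffices to show that $(b_1,b_4,b_3,b_2,b_5)$ \emph{minimizes} the adjacency sum over all cyclic arrangements, for then $e(G_0)\ge T(b)$. I would prove this by an exchange argument: first show that in any minimizing arrangement the largest value $b_1$ is flanked by the two smallest values $b_4,b_5$ (otherwise a swap moving a smaller value next to $b_1$ decreases the sum); this forces $\{b_2,b_3\}$ onto the two remaining positions, and a final comparison using $(b_2-b_3)(b_4-b_5)\ge 0$ shows that pairing $b_4$ with $b_3$ and $b_5$ with $b_2$ is optimal, which is precisely the order defining $T(b)$. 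Since there are only $12$ cyclic arrangements up to rotation and reflection, a finite check is also available.

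For Part 2, by Part 1 it suffices to minimize $T(b)$ over the polytope $R=\{\,n\ge b_1\ge\cdots\ge b_5\ge 0,\ \sum_i b_i=pn+q\,\}$ and identify the minimum with $f(n,p,q)$. The key observation is that each term of $T$ is a product of two distinct coordinates, so $\partial T/\partial b_i$ equals the sum of the two cyclic neighbours of position $i$: explicitly $\partial_{b_1}T=b_4+b_5$, $\partial_{b_2}T=b_3+b_5$, $\partial_{b_3}T=b_2+b_4$, $\partial_{b_4}T=b_1+b_3$, $\partial_{b_5}T=b_1+b_2$. A direct pairwise comparison shows that for $b$ sorted non-increasingly these satisfy $\partial_{b_1}T\le\partial_{b_2}T\le\cdots\le\partial_{b_5}T$, each inequality reducing to a trivial comparison between sorted entries. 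Consequently, transferring mass from a higher-indexed coordinate $j$ to a lower-indexed coordinate $i<j$ (keeping the sum fixed) changes $T$ at rate $\partial_{b_i}T-\partial_{b_j}T\le 0$, so it never increases $T$. Iterating such upward transfers, always raising the smallest coordinate still below the cap $n$ and lowering the largest coordinate still positive, keeps $b$ inside $R$ and drives it to the unique top-saturated point $b^{\ast}=(\,\underbrace{n,\dots,n}_{p},\,q,\,\underbrace{0,\dots,0}_{4-p}\,)$. Hence $T(b)\ge T(b^{\ast})$, and evaluating $T(b^{\ast})$ in the five ranges of $p$ returns exactly $f(n,p,q)$, with equality at the stated configuration.

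The main obstacle is the smoothing step: one must verify that the monotonicity $\partial_{b_1}T\le\cdots\le\partial_{b_5}T$ persists along the whole trajectory and that each transfer respects both the ordering constraints and the box constraints $0\le b_i\le n$. The box constraint is essential rather than cosmetic — without $b_i\le n$ the bound already fails for $p=2$, since concentrating all mass in $b_1$ gives $T=0$ — so the argument must genuinely use that no part exceeds $n$ vertices. A convenient cross-check for $p\in\{3,4\}$ is the substitution $c_i=n-b_i\ge 0$, under which $T(b)=(2p-5)n^2+2nq+\sum c_i c_j$ over the same cyclic pairs, making $T\ge(2p-5)n^2+2nq=f(n,p,q)$ immediate; the transfer argument is what extends this cleanly to $p\le 2$.
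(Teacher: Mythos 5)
Your proposal follows essentially the same route as the paper: Part~1 is the cyclic rearrangement inequality (the paper verifies the $12$ arrangements in six cases via the product-swap inequality $(x_1-x_2)(y_1-y_2)\ge 0$, i.e.\ the finite check you mention as a fallback), and Part~2 is exactly the paper's smoothing algorithm --- unit transfers from the largest positive coordinate to the smallest unsaturated one, justified there by a table of increments $T(b^{ij})-T(b)\le 0$ that is equivalent to your monotonicity $\partial_{b_1}T\le\cdots\le\partial_{b_5}T$ together with preservation of sortedness along each transfer. The argument is correct as outlined; the substitution $c_i=n-b_i$ giving $T(b)=(2p-5)n^2+2nq+\sum c_uc_v$ for $p\ge 3$ is a pleasant shortcut not present in the paper, but the overall strategy is the same.
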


\begin{figure}[htp]
    \centering
\begin{tikzpicture}[x=0.5pt,y=0.5pt,yscale=-1,xscale=1]

\draw   (192.04,111.21) .. controls (192.04,83.45) and (220.49,60.94) .. (255.59,60.94) .. controls (290.68,60.94) and (319.13,83.45) .. (319.13,111.21) .. controls (319.13,138.98) and (290.68,161.48) .. (255.59,161.48) .. controls (220.49,161.48) and (192.04,138.98) .. (192.04,111.21) -- cycle ;
\draw   (97,175.62) .. controls (97,146.99) and (123.38,123.78) .. (155.93,123.78) .. controls (188.47,123.78) and (214.85,146.99) .. (214.85,175.62) .. controls (214.85,204.25) and (188.47,227.46) .. (155.93,227.46) .. controls (123.38,227.46) and (97,204.25) .. (97,175.62) -- cycle ;
\draw   (293.88,177.28) .. controls (293.88,148.6) and (323.6,125.35) .. (360.27,125.35) .. controls (396.94,125.35) and (426.67,148.6) .. (426.67,177.28) .. controls (426.67,205.97) and (396.94,229.22) .. (360.27,229.22) .. controls (323.6,229.22) and (293.88,205.97) .. (293.88,177.28) -- cycle ;
\draw   (106.5,285.75) .. controls (106.5,257.2) and (134.22,234.06) .. (168.42,234.06) .. controls (202.61,234.06) and (230.33,257.2) .. (230.33,285.75) .. controls (230.33,314.29) and (202.61,337.43) .. (168.42,337.43) .. controls (134.22,337.43) and (106.5,314.29) .. (106.5,285.75) -- cycle ;
\draw   (252.74,286.2) .. controls (252.74,257.03) and (281.82,233.39) .. (317.71,233.39) .. controls (353.59,233.39) and (382.67,257.03) .. (382.67,286.2) .. controls (382.67,315.36) and (353.59,339) .. (317.71,339) .. controls (281.82,339) and (252.74,315.36) .. (252.74,286.2) -- cycle ;
\draw   (252.33,93.93) -- (376.97,172.48) -- (341.94,309.15) -- (180.64,307.58) -- (135.83,174.84) -- cycle ;
\draw   (177.38,213.54) .. controls (177.38,173.41) and (211.12,140.88) .. (252.74,140.88) .. controls (294.35,140.88) and (328.09,173.41) .. (328.09,213.54) .. controls (328.09,253.67) and (294.35,286.2) .. (252.74,286.2) .. controls (211.12,286.2) and (177.38,253.67) .. (177.38,213.54) -- cycle ;

\draw (255.18,61.37) node [anchor=north west][inner sep=0.75pt]   [align=left] {$\displaystyle A_{1}$};
\draw (359.86,125.78) node [anchor=north west][inner sep=0.75pt]   [align=left] {$\displaystyle A_{2}$};
\draw (317.3,233.82) node [anchor=north west][inner sep=0.75pt]   [align=left] {$\displaystyle A_{3}$};
\draw (160.01,234.49) node [anchor=north west][inner sep=0.75pt]   [align=left] {$\displaystyle A_{4}$};
\draw (155.52,124.21) node [anchor=north west][inner sep=0.75pt]   [align=left] {$\displaystyle A_{5}$};
\draw (240.62,198.04) node [anchor=north west][inner sep=0.75pt]   [align=left] {$\displaystyle G_0$};
\draw (245.32,143.84) node [anchor=north west][inner sep=0.75pt]   [align=left] {$\displaystyle b_{1}$};
\draw (303.98,182.33) node [anchor=north west][inner sep=0.75pt]   [align=left] {$\displaystyle b_{4}$};
\draw (188.3,180.76) node [anchor=north west][inner sep=0.75pt]   [align=left] {$\displaystyle b_{5}$};
\draw (195.63,248.31) node [anchor=north west][inner sep=0.75pt]   [align=left] {$\displaystyle b_{2}$};
\draw (281.98,247.53) node [anchor=north west][inner sep=0.75pt]   [align=left] {$\displaystyle b_{3}$};

\end{tikzpicture}
\caption{$C_5^{n}$ and its subgraph $G_0$}
    \label{fig:enter-label}
\end{figure}
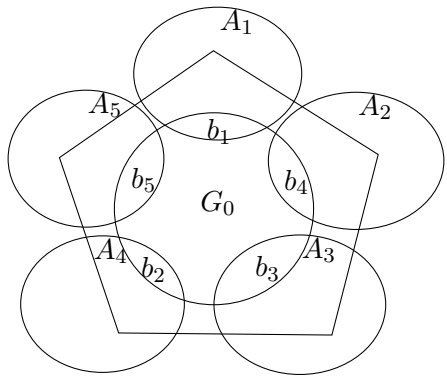

\begin{proof}
Without loss of generality, let $ a_1 $ denote the largest element among $\{a_n\}$, and assume that $ a_2 \geq a_5 $. Based on the arrangements of these elements in accordance with their magnitude, we obtain a total of $\frac{4!}{2} = 12$ distinct permutations. Let $ S_a = a_1 a_2 + a_1 a_5 + a_2 a_3 + a_3 a_4 + a_4 a_5 $ and $ T(b) = b_1 b_4 + b_1 b_5 + b_2 b_3 + b_2 b_5 + b_3 b_4 $ where $b=(b_1,b_2,b_3,b_4,b_5)$. 

Note that $(x_1y_1+x_2y_2)-(x_1y_2+x_2y_1)=(x_1-x_2)(y_1-y_2)$. Therefore, the following claim holds.

\textit{Claim 2.1} If $x_1\geq x_2$ and $y_1\geq y_2$, then $x_1y_1+x_2y_2\geq x_1y_2+x_2y_1$.

Next, we will show that $e(G_0)=S_a\geq T(b)$ always holds.

\textit{Case 1}: $b_2=a_2$, $b_3=a_5$. By Claim 2.1, $S_a\geq b_1b_2+b_1b_3+b_2b_5+b_4b_5+b_3b_4=T(b)+(b_3-b_5)(b_1-b_2)+(b_1-b_5)(b_2-b_4)\geq T(b)$. 
        
\textit{Case 2}: $b_2=a_2$, $b_4=a_5$. By Claim 2.1, $S_a\geq b_1b_2+b_1b_4+b_2b_5+b_3b_5+b_3b_4=T(b)+(b_2-b_5)(b_1-b_3)\geq T(b)$. 

\textit{Case 3}: $b_2=a_2$, $b_5=a_5$. By Claim 2.1, $S_a\geq b_1b_2+b_1b_5+b_2b_4+b_3b_4+b_3b_5=T(b)+(b_1-b_2)(b_3-b_4)+(b_1-b_5)(b_2-b_3)\geq T(b)$.

\textit{Case 4}: $b_3=a_2$, $b_4=a_5$. By Claim 2.1, $S_a\geq b_1b_3+b_1b_4+b_3b_5+b_2b_5+b_2b_4=T(b)+(b_1-b_3)(b_3-b_5)+(b_4-b_5)(b_2-b_3)\geq T(b)$.

\textit{Case 5}: $b_3=a_2$, $b_5=a_5$. By Claim 2.1, $S_a\geq b_1b_3+b_1b_5+b_3b_4+b_2b_4+b_2b_5=T(b)+(b_1-b_2)(b_3-b_4)\geq T(b)$. 

\textit{Case 6}: $b_4=a_2$,$b_5=a_5$.  By Claim 2.1, $S_a\geq T(b)$. Equality holds if $(b_2,b_3)=(a_3,a_2)$. 

Case 1 to 6 together show that $S_a\geq T(b)$ under different permutations.

(2)

If $i<j$, let $b^{ij}=(b_1^{i j},b_2^{i j},b_3^{i j},b_4^{i j},b_5^{i j})$, 
where $b_k^{i j}=\begin{cases}    b_k+1 & k=i \\    b_k-1 & k=j \\
    b_k   & \text{other cases}\end{cases}$.
Since $b_i\geq b_j$ for $i<j$, $T({b^{i j}})-T(b)\leq 0$ for $i<j$, according to the table given below.

\begin{table}[htb]
\begin{tabular}{|l|l|l|l|l|}
\hline
$T(b^{i j})-T(b)$ & $i=1$                & $i=2$                & $i=3$                & $i=4$      \\ \hline
$j=2$  & $b_4-b_3$           &                  &                      &            \\ \hline
$j=3$   & $b_5-b_2$           & $b_5-b_4+b_3-b_2-1$ &                  &            \\ \hline
$j=4$   & $b_5-b_3+b_4-b_1-1$ & $b_5-b_1$           & $b_2-b_1+b_4-b_3-1$ &         \\ \hline
$j=5$   & $b_5-b_2+b_4-b_1-1$ & $b_5-b_1+b_3-b_2-1$ & $b_4-b_1$           & $b_3-b_2$ \\ \hline
\end{tabular}
\caption{$T(b^{i j})-T(b)$}
\end{table}

Let $b^*=(b_1^{*},b_2^{*},b_3^{*},b_4^{*},b_5^{*})$, where $b_i^*=\begin{cases} n & i\leq p\\ q & i=p+1 \\ 0 & \text{other cases}\end{cases}$. For any $b$ satisfying $b_i\geq b_j$ for $i>j$ and $\sum\limits_{i=1}^5 b_i=pn+q$, we can apply the following algorithm to get a new value $\min(T(b))=T(b^*)$ not less than $T(b)$.
\begin{algorithm}[H]
    \caption{Algorithm of $\min(T(b))$}
    \label{alg:AOS}
    \renewcommand{\algorithmicrequire}{\textbf{Input:}}
    \renewcommand{\algorithmicensure}{\textbf{Output:}}
    \begin{algorithmic}[1]
        \REQUIRE b, b*   
        \ENSURE $\min(T(b))$    
        
        \STATE declare T as function defined in Lemma 2.2
        
        \WHILE {$b\neq b^*$}
            \STATE $x = \min(i)$ such that $b_i< n$, $y = \max(j)$ such that $b_j>0$
            \IF {$x<y$}
                \STATE $b_x=b_x+1$; $b_y=b_y-1$; $\min(T(b))=T(b)$;
            \ENDIF
        \ENDWHILE
        
        \RETURN $\min(T(b))$
    \end{algorithmic}
\end{algorithm}

Note that each operation in the while loop guarantees $b_i \geq b_j$ for $i > j$ and $\sum_{i=1}^5 b_i = p n + q $. In this case, according to Table 1, $T(b)$ does not increase after each iteration. In addition, $|| b - b^* ||$ decreases by 2 in each iteration, where $|| \cdot ||$ represents the sum of the absolute values of each component of the vector. This implies that after $|| b - b^* ||/2$  iterations, the while loop will terminate. At this time, $\min(T(b))=T(b^*)$.

Therefore, $e(G_0)\geq T(b^*)=f(n,p,q)$. Equality holds if $a_1\geq a_4\geq a_3\geq a_2\geq a_5$, and $\{a_n\}$ contains $p$ elements equal to $n$, 1 element equal to $q$, and the rest are equal to 0.
\end{proof}

\begin{remark}
    Actually, if $e(G_0)=T(b)=f(n,p,q)$, $T(b)$ should not decrease after each iteration. This implies if $b\neq b^*$, $b$ can only be $(b_1,b_2,0,0,0)$ and $(n,n,n,b_4,b_5)$ according to Table 1. Hence, if $pn+q=2n+q$, the equality holds if and only if $b=(n,n,q,0,0)$.
\end{remark}

Next, we will show $D_{2,\infty}^b(H^{2n})=37 n^2$ to prove Theorem 1.6.

\begin{proof}[Proof of Theorem 1.6]
   
    Let $V(H^{2n})=A\cup A^c$ be the 2-balanced partition of $H^{2n}$ such that $\max\{e(A), e(A^c)\} =D_{2,\infty}^b(H^{2n})$. Here, $|A|=12 n\geq |C_5^{2n}|$.  
    If $A\supset V(C_5^{2n})$, $e(A)=40n^2> 37n^2$.  
    Otherwise, let $|A\cap V(C_5^{2n})|=p\cdot 2n+ q$, where $p$ and $q$ are non-negative integers  with $p<5$ and $q<2n$. Without loss of generality, let $|A\cap V(C_5^{2n})|\geq |V(C_5^{2n})|/2 =5n$. If this condition does not hold, we can let $A = A^c$, where $V(H^{2n}) = A^c \cup (A^c)^c$ forms the same 2-balanced partition. Hence, $p\geq 2$, and when $p=2$, $q>n$.
    Therefore, by Lemma 2.7
    $$
    \begin{aligned}
        e(A)
        & = (|A|-|A\cap V(C_5^{2n})|)|A\cap V(C_5^{2n})|+e(A\cap V(C_5^{2n})) \\
        &\geq (2pn+q)(12n-2pn-q)+f(2n,p,q) \\
        &=\begin{cases}
        32n^2+q(6n-q) & p=2, n\leq q <2n  \\
        40n^2+4nq-q^2 & p=3, 0\leq q <2n  \\
        44n^2-q^2 & p=4, 0\leq q <2n
        \end{cases}.
    \end{aligned}
    $$
        
Hence, $e(A)\geq 37n^2$. Equality holds if and only if $|A\cap V(C_5^{2n})|$ and $e(A)=(2pn+q)(12n-2pn-q)+f(n,p,q)$. By Remark 2.3, this implies
$A$ contains all vertices in $A_1$, $A_4$ and half vertices in $A_3$ in Figure 2, then $e(A^c)=(|A^c|-|A^c\cap V(C_5^{2n})|)|A\cap V(C_5^{2n})|+e(A^c\cap V(C_5^{2n}))=5n\cdot7n+2n^2=37n^2$.

Therefore, $D_{2,\infty}^b(G)\geq e(A)\geq 37n^2$. Additionally, $D_{2,\infty}^b(G)=37n^2$ if and only if $A$ contains $7n$ vertices from $|G[A]\cap C_5^{2n}|=K_{n,2n}\cup I_{2n}$, where $K_{n,2n}$ represents a complete bipartite with class sizes $2n$ and $n$,and $I_{2n}$ represents a $2n$-vertex independents.
   
\end{proof}

\section{Some technical lemmas}

In this section, we will introduce the notations and lemmas that will be used later in this paper. Let $\alpha(G)$ be the independent number of $G$. For $v\in V(G)$, let $N_G(v)$ be the set of neighbors of $v$ in $G$ and $d_G(v)=|N_G(v)|$. If $G$ is clear context, we also write $N(v)$ and $d(v)$. We also omit floors and ceiling when they are not essential in our proofs.

The following 3 lemmas will be applied in this paper to estimate the upper bound. For completeness we include their proofs here.

\begin{lemma}
    Let $G$ be a $K_4$-free graph on $n$ vertices.\\
    1. If Z is a maximal triangle-free induced graph of G, then 
    \begin{equation}
        e(G)\leq \min\left\{\frac{|Z||G|}{2},|Z|(|G|-\frac 3 4|Z|)\right\}.
    \end{equation}    
    2. If $Z_0$ is a triangle-free induced graph of $G$ satisfying $|Z_0|\geq \frac{2}{3}|G|$, 
    \begin{align}
        e(G)\leq |Z_0|(|G|-\frac 3 4|Z_0|).
    \end{align}  
\end{lemma}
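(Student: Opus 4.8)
The plan is to derive everything from one structural fact: since $G$ is $K_4$-free, the neighborhood $N(v)$ of every vertex $v$ induces a triangle-free graph (a triangle inside $N(v)$ together with $v$ would be a $K_4$). Consequently $G[N(v)]$ is itself a triangle-free induced subgraph of $G$, so if $Z$ is taken to be a \emph{maximum} triangle-free induced subgraph (this is essential, and I read ``maximal'' in this sense), then $d(v)=|N(v)|\le |Z|$ for every $v\in V(G)$. Writing $n:=|G|$ and $z:=|Z|$, summing over all vertices gives $2e(G)=\sum_{v}d(v)\le nz$, which is exactly the first term $\tfrac{|Z||G|}{2}$ of the minimum in part~1.

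For the second term I would split the vertex set as $V(G)=Z\cup W$ with $W:=V(G)\setminus Z$ and decompose $e(G)=e(Z)+e(Z,W)+e(W)$. Mantel's theorem applied to the triangle-free graph $Z$ gives $e(Z)\le z^2/4$. For the remaining edges, the degree bound yields $e(Z,W)+2e(W)=\sum_{w\in W}d(w)\le z|W|=z(n-z)$, and since $e(W)\ge 0$ this forces $e(Z,W)+e(W)\le z(n-z)$. Adding the two estimates gives $e(G)\le \tfrac{z^2}{4}+z(n-z)=z\!\left(n-\tfrac34 z\right)$, the second term of the minimum. Together these prove part~1.

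For part~2 the natural first attempt — bound $e(Z_0)$ by Mantel, $e(Z_0,W)$ by $|Z_0||W|$, and $e(W)$ by the Tur\'an bound $\tfrac13|W|^2$ — is too lossy and does \emph{not} close (it leaves a stray positive $\tfrac13(n-|Z_0|)^2$ term), so I would instead reduce to part~1. Let $Z^{\ast}$ be a maximum triangle-free induced subgraph; since $Z_0$ is one candidate, $|Z^{\ast}|\ge |Z_0|\ge \tfrac23 n$. Part~1 (applied to $Z^{\ast}$) gives $e(G)\le g(|Z^{\ast}|)$, where $g(t):=t\!\left(n-\tfrac34 t\right)=tn-\tfrac34 t^2$. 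The function $g$ is a downward parabola with vertex at $t=\tfrac23 n$, hence nonincreasing on $[\tfrac23 n,\,n]$; as $|Z^{\ast}|\ge |Z_0|$ both lie in this interval, $g(|Z^{\ast}|)\le g(|Z_0|)$. Chaining the two inequalities yields $e(G)\le g(|Z_0|)=|Z_0|\!\left(|G|-\tfrac34|Z_0|\right)$, as required.

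The only genuine subtlety — and the step I would pin down first — is the degree bound $d(v)\le |Z|$: it holds for a \emph{maximum} triangle-free induced subgraph but fails for a merely inclusion-maximal one (for $K_{3,1,1}$ the two singleton classes form a maximal triangle-free set of size $2$, yet $e(G)=7>\tfrac{2\cdot 5}{2}$), so the correct reading of the hypothesis is precisely what makes part~1 go through. Everything after that is routine: Mantel, a nonnegativity step, and the monotonicity of a single quadratic. I expect the main effort to be presentational rather than conceptual, namely checking that the regime $|Z_0|\ge \tfrac23 n$ is exactly what places $|Z_0|$ and $|Z^{\ast}|$ on the decreasing branch of $g$.
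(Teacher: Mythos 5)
Your proof is correct and follows essentially the same route as the paper: the degree bound $d(v)\le|Z|$ from triangle-freeness of neighborhoods, the decomposition $e(G)\le e(Z)+\sum_{w\notin Z}d(w)$ with Mantel's theorem for the second term, and monotonicity of $x(|G|-\tfrac34 x)$ on $[\tfrac23|G|,|G|]$ for part 2. Your observation that ``maximal'' must be read as ``maximum'' (with the $K_{3,1,1}$ counterexample) is a valid and worthwhile clarification of an imprecision the paper leaves implicit.
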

\begin{proof}
    For any vertex $v\in G$, where $G$ is $K_4$-free, its neighbor $N(v)$ is triangle-free, so $d(v)\leq |Z|$, which implies $e(G)=\frac{1}{2}\sum\limits_{v\in V(G)}d(v)\leq \frac{|Z||G|}{2}$. In addition, by Turán's Theorem $e(Z)\leq \frac{|Z|^2} 4$. Hence, $e(G)\leq e(Z)+e(Z,G\setminus Z)+2 e(G\setminus Z)\leq \frac{|Z|^2}4+\sum\limits_{v\in V(G\setminus Z)}d(v)\leq |Z|(|G|-\frac 3 4|Z|)$. 
    
    Clearly, the function $f(x):=x(|G|-\frac{3}{4}x)$ decreases with respect to $x\geq \frac{2}{3}|G|$. Hence, $|Z_0|\geq \frac 2 3|G|$ implies inequality (5).
\end{proof}

\begin{lemma}
    Let $G$ be a triangle-free graph on $n$ vertices.If $I_0$ is an independent set of $G$ with $|I_0|\geq \frac{|G|}{2}$, then  $e(G)\leq |I_0|(|G|-|I_0|)$.
\end{lemma}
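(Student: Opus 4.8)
The plan is to prove Lemma 3.2: for a triangle-free graph $G$ on $n$ vertices with an independent set $I_0$ satisfying $|I_0|\geq |G|/2$, we have $e(G)\leq |I_0|(|G|-|I_0|)$. The natural approach is to split the edges according to whether they lie inside $I_0$, between $I_0$ and its complement, or entirely inside the complement $W:=V(G)\setminus I_0$. Since $I_0$ is independent, $e(I_0)=0$ immediately, so $e(G)=e(I_0,W)+e(W)$. The first term is trivially bounded by $|I_0|\,|W|=|I_0|(|G|-|I_0|)$, so the entire burden of the proof is to argue that this crude bound already absorbs the contribution $e(W)$, or more precisely that the total cannot exceed $|I_0|(|G|-|I_0|)$.

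The key mechanism I expect to exploit is the triangle-freeness together with the size hypothesis $|I_0|\geq |G|/2$. First I would observe that because $G$ is triangle-free, every vertex $w\in W$ has a neighborhood $N(w)$ that is an independent set; in particular the neighbors of $w$ inside $I_0$ together with $w$ itself behave constrainedly. The cleanest route is a degree/counting argument: for each vertex $w\in W$, write $d(w)=d_{I_0}(w)+d_W(w)$, where $d_{I_0}(w)$ counts edges to $I_0$ and $d_W(w)$ counts edges within $W$. The point is to show that shifting the endpoints of edges inside $W$ cannot beat simply routing every edge to $I_0$. Equivalently, I would try to prove that $e(I_0,W)+e(W)\leq |I_0|\,|W|$ by showing $e(W)\leq |I_0|\,|W|-e(I_0,W)$, i.e. that the number of ``missing'' edges between $I_0$ and $W$ dominates $e(W)$.

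The concrete step I would carry out is the following: for each edge $ww'$ inside $W$, triangle-freeness forces $N(w)\cap N(w')=\emptyset$, so $w$ and $w'$ cannot share a common neighbor in $I_0$. I would set up an injection or a double-counting inequality charging each within-$W$ edge against a pair of distinct non-adjacencies into $I_0$, using that $|I_0|\geq|W|$. Alternatively, a slicker argument processes the vertices of $W$ one at a time, maintaining the invariant that adding a vertex $w$ together with all its incident edges increases $e(G)$ by $d_{I_0}(w)+d_W(w)$, and comparing this against the increment $|I_0|$ in the target bound $|I_0|\cdot(\text{current }|W|)$; triangle-freeness caps $d_{I_0}(w)+d_W(w)$ suitably because the neighbors already placed in $W$ have their $I_0$-neighborhoods disjoint from that of $w$. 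I would likely state it as: since $N(w)$ is independent and contained in $I_0\cup W$, and $I_0$ is independent, the neighbors of $w$ in $W$ contribute to $e(W)$ while ``blocking'' equally many potential $I_0$-edges.

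The main obstacle I anticipate is making the trade-off between $e(W)$ and the deficit in $e(I_0,W)$ fully rigorous without hidden assumptions, since the inequality is genuinely tight (equality holding for the complete bipartite graph $K_{|I_0|,|W|}$ when $|I_0|=|W|$, and more generally when $W$ is independent and completely joined to $I_0$). A degree argument alone is slightly delicate because a single vertex of $W$ can have high degree into $I_0$; the balance must come from the global count. I therefore expect the cleanest proof to be a direct application of the triangle-free structure via the fact that $\sum_{w\in W} d_{I_0}(w) = e(I_0,W)$ while $2e(W)=\sum_{w\in W} d_W(w)$, combined with the observation that each $w\in W$ satisfies $d(w)=d_{I_0}(w)+d_W(w)\le |I_0|$ (its whole neighborhood being an independent set of size at most $\alpha(G)$, and here bounded by $|I_0|$ since $N(w)\subseteq$ an independent set). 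Summing $d(w)\le |I_0|$ over $w\in W$ gives $e(I_0,W)+2e(W)\le |I_0|\,|W|$, whence $e(G)=e(I_0,W)+e(W)\le |I_0|\,|W|-e(W)\le |I_0|(|G|-|I_0|)$, and the hypothesis $|I_0|\ge|G|/2$ is exactly what guarantees $|I_0|\ge|W|$ so that the neighborhood bound $d(w)\le|I_0|$ is valid. Verifying that $d(w)\le|I_0|$ for every $w\in W$ — i.e. that no neighborhood exceeds $|I_0|$ — is the crux, and I would pin it down before assembling the final summation.
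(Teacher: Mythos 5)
Your final, concrete argument hinges on the claim that every $w\in W:=V(G)\setminus I_0$ satisfies $d(w)\le|I_0|$, and you yourself flag this as ``the crux'' to be pinned down. Unfortunately it is false as stated: triangle-freeness only gives that $N(w)$ is an independent set, hence $d(w)\le\alpha(G)$, and $\alpha(G)$ may well exceed $|I_0|$, since $I_0$ is merely \emph{some} independent set of size at least $|G|/2$, not a maximum one. The hypothesis $|I_0|\ge|G|/2$ does not rescue the claim, because $N(w)$ need not be contained in $I_0$ (nor in any set of size $|I_0|$). Concretely, take $G=K_{1,n-1}$ with center $c$ and let $I_0$ consist of $n/2$ leaves; then $I_0$ is independent with $|I_0|=n/2\ge|G|/2$, yet $c\in W$ has $d(c)=n-1>|I_0|$. (The lemma itself still holds for this graph; it is only your intermediate degree bound, and hence your derivation of $\sum_{w\in W}d(w)\le|I_0||W|$, that breaks.) The earlier, vaguer ideas in your write-up --- charging each edge of $W$ against two non-adjacencies into $I_0$ via $N(w)\cap N(w')=\emptyset$ --- are not carried out, so they cannot close the gap as written.

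The repair is a small but essential detour, and it is exactly what the paper does: run your summation not against $I_0$ but against a \emph{maximum} independent set $I$. For such an $I$ the bound $d(v)\le|I|=\alpha(G)$ is genuinely valid for every vertex, and your computation $e(G)\le e(I,G\setminus I)+2e(G\setminus I)=\sum_{v\in G\setminus I}d(v)\le|I|\,(|G|-|I|)$ goes through verbatim. Then transfer the bound from $|I|$ to $|I_0|$ by monotonicity: the function $f(x)=x(|G|-x)$ is decreasing for $x\ge|G|/2$, and $|I|\ge|I_0|\ge|G|/2$, so $|I|(|G|-|I|)\le|I_0|(|G|-|I_0|)$. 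With that two-step structure your approach coincides with the paper's proof; without it, the argument has a genuine hole.
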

\begin{proof}
    Let $I$ be a maximal independent set of triangle-free graph $G$. For any vertex $v\in G$, its neighbor $N(v)$ is an independent set, so $d(v)\leq |I|$, which implies $e(G)\leq e(I)+2e(I,G\setminus I)=\sum_{v\in V(G \setminus I)}d(v)\leq (|G|-|I|)|I|$. 
    
    Clearly, the function $f(x):=x(|G|-x)$ is decreasing with respect to $x\geq \frac n 2$. Hence, $|I_0|\geq  |G|/2 $ implies $e(G)\leq |I_0|(|G|-|I_0|)$.
\end{proof}

Moreover, if a $K_4$-free graph $G$ contains a large independent set, we can provide an upper bound by Lemma 3.1.

\begin{lemma}
    Let $G$ be a $K_4$-free graph on $n$ vertices and $I$ be an independent set of $G$. If $Z$ is a largest triangle-free induced graph of $G\setminus I$, then 
    \begin{align}
        e(G)\leq \min\left\{\frac{|Z|(|G|+|I|)}{2},|Z|(|G|-\frac 3 4|Z|)\right\}.
    \end{align}
    Moreover, if $|I|>\frac{|G|}{3}$, then
    \begin{align}
        e(G)\leq \frac{1}{4}(|G|-|I|)(|G|+3|I|)
    \end{align}
\end{lemma}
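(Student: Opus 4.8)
The plan is to reduce everything to Lemma 3.1, applied not to $G$ itself but to the $K_4$-free graph $G\setminus I$, after first accounting separately for the edges meeting the independent set $I$. The starting observation is that, since $I$ is independent, every edge of $G$ either lies inside $G\setminus I$ or joins $I$ to $G\setminus I$; hence $e(G)=e(G\setminus I)+e(I,G\setminus I)$, and it suffices to estimate the two pieces.

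First I would bound the edges incident to $I$. For $v\in I$, independence of $I$ forces $N(v)\subseteq V(G)\setminus I$, and $K_4$-freeness of $G$ forces $N(v)$ to induce a triangle-free graph; thus $N(v)$ is a triangle-free induced subgraph of $G\setminus I$, and since $Z$ is a largest such subgraph, $d(v)=|N(v)|\le|Z|$. Summing over $I$ gives $e(I,G\setminus I)=\sum_{v\in I}d(v)\le|I|\,|Z|$. Next I would apply Lemma 3.1 to $G\setminus I$, which is $K_4$-free with largest triangle-free induced subgraph $Z$, obtaining $e(G\setminus I)\le\min\{\tfrac12|Z|(|G|-|I|),\,|Z|(|G|-|I|-\tfrac34|Z|)\}$. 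Adding the two estimates and simplifying each case separately, namely $\tfrac12|Z|(|G|-|I|)+|I||Z|=\tfrac12|Z|(|G|+|I|)$ and $|Z|(|G|-|I|-\tfrac34|Z|)+|I||Z|=|Z|(|G|-\tfrac34|Z|)$, reproduces exactly the claimed minimum, establishing the first inequality.

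For the ``moreover'' part I would view the second bound as the function $g(t):=t(|G|-\tfrac34 t)$ of the variable $t=|Z|$. This is a downward parabola maximized at $t=\tfrac23|G|$, hence increasing on $[0,\tfrac23|G|]$. Since $|Z|\le|G\setminus I|=|G|-|I|$, and the hypothesis $|I|>|G|/3$ gives $|G|-|I|<\tfrac23|G|$, monotonicity yields $e(G)\le g(|Z|)\le g(|G|-|I|)=\tfrac14(|G|-|I|)(|G|+3|I|)$, which is the desired inequality.

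I expect no serious obstacle in this argument, since it is essentially bookkeeping on top of Lemma 3.1. The only points demanding care are the very first step, verifying that the neighborhood of a vertex of $I$ is genuinely an induced triangle-free subgraph of $G\setminus I$ so that its size is controlled by $|Z|$, and correctly invoking Lemma 3.1 for $G\setminus I$ rather than for $G$, so that after adding $|I||Z|$ the arithmetic collapses precisely to the two stated expressions.
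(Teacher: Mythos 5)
Your proposal is correct and follows essentially the same route as the paper: split $e(G)=e(G\setminus I)+e(I,G\setminus I)$ using independence of $I$, bound $e(I,G\setminus I)\le |I||Z|$ via triangle-free neighborhoods, invoke Lemma 3.1 for $G\setminus I$, and then use monotonicity of $t\mapsto t(|G|-\tfrac34 t)$ together with $|Z|\le |G|-|I|<\tfrac23|G|$ for the ``moreover'' part. In fact your monotonicity step (increasing on $[0,\tfrac23|G|]$) is stated more cleanly than the paper's, which appeals to the function being decreasing for $x\ge \tfrac23|G|$ while actually using values below that threshold.
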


\begin{proof}
    For any vertex $v\in I$, its neighbor $N(v)$ is triangle-free and $N(v)\subset G\setminus I$. Hence, $d(v)=|N(v)|\leq |Z|$, which implies $e(V(I),V(G\setminus I))\leq \sum\limits_{v\in V(I)}d(v) \leq |Z||I|$. By Lemma 2.1, $e(G\setminus I)\leq \min\left\{\frac{|Z|(|G|-|I|)}{2},|Z|(|G|-|I|-\frac{3}{4} |Z|\right\}$. Therefore, $e(G)=e(I)+e(G\setminus I)+e(G\setminus I)\leq \min\left\{\frac{|Z|(|G|+|I|)}{2},|Z|(|G|-\frac 3 4|Z|)\right\}$.

    Clearly, the function $f(x):=x(|G|-\frac 3 4 x)$ is decreasing with respect to $x\geq \frac{2}{3}|G|$. Hence, $|Z|\leq|G\setminus I|<\frac 2 3|G|$ implies $e(G)\leq  |Z|(|G|-\frac 3 4|Z|)\leq (|G|-|I|)(\frac 1 4 |G|+\frac 3 4(I))$.
\end{proof}

The following conclusions can help us find a proper balanced 2-partition.
\begin{Prop}
    Let $G$ be a graph on $n$ vertices and $V(G)=B\cup C$. If $ p\in\{0,1,\ldots,|V(B)|\}$, there exists a vertex subset $Y$ (where $Y\subset B$) of size $p$ such that $$e(Y,C)\leq \frac{p}{|B|}e(B,C)$$. 
\end{Prop}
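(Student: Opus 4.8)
The plan is to use a simple averaging argument over all $p$-element subsets of $B$. First I would record, for each vertex $v\in B$, its degree into $C$, writing $d_C(v)=|N(v)\cap C|$, so that $e(B,C)=\sum_{v\in B}d_C(v)$ and, for any $Y\subset B$, $e(Y,C)=\sum_{v\in Y}d_C(v)$. The quantity we want to control is therefore just a sum of the nonnegative weights $d_C(v)$ over the $p$ chosen vertices, and the whole statement becomes a question about sums of a finite list of numbers.

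Next I would consider the collection of all $\binom{|B|}{p}$ subsets $Y\subset B$ of size $p$ and compute the average of $e(Y,C)$ over this collection. Each fixed vertex $v\in B$ lies in exactly $\binom{|B|-1}{p-1}$ of these subsets, i.e. in a $p/|B|$ fraction of them, so by linearity the average value of $e(Y,C)$ equals $\frac{p}{|B|}\sum_{v\in B}d_C(v)=\frac{p}{|B|}e(B,C)$. Since the minimum of a finite family of real numbers is at most its average, some subset $Y$ of size $p$ satisfies $e(Y,C)\le \frac{p}{|B|}e(B,C)$, which is exactly the asserted bound.

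Equivalently, and more constructively, I would order the vertices of $B$ so that $d_C$ is non-decreasing and take $Y$ to consist of the $p$ vertices of smallest degree into $C$; the sum of the $p$ smallest among $|B|$ nonnegative numbers is at most the $p/|B|$-proportion of their total, giving the same conclusion. The boundary cases $p=0$ (take $Y=\emptyset$) and $p=|B|$ (take $Y=B$) hold with equality, and the case $|B|=0$ is vacuous. There is essentially no genuine obstacle here: the one point to verify carefully is the counting identity that each vertex of $B$ is covered by a $p/|B|$ fraction of the $p$-subsets, after which the inequality is immediate. This Proposition is simply the standard device that transfers a ``global'' cut count $e(B,C)$ into a proportionally small ``local'' contribution $e(Y,C)$, which is what will later allow us to adjust a partition toward balance while keeping the number of crossing edges under control.
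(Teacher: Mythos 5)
Your averaging argument is correct and complete: counting that each vertex of $B$ lies in a $p/|B|$ fraction of the $p$-subsets gives the average value $\frac{p}{|B|}e(B,C)$, and the minimum is at most the average; the greedy variant (taking the $p$ vertices of smallest degree into $C$) is an equally valid shortcut. The paper itself supplies no proof of this proposition, merely citing the literature, so your write-up is if anything more self-contained than the source; there is no gap.
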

This proposition can be found in the literature (e.g.,\cite{krivelevich1995}).

\begin{Th}\cite{xu2010}
    Let $G$ be a graph with $n$ vertices and $m$ edges. If $n$ is even,
then $G$ admits a balanced 2-partition $V(G)=A\cup A^c$ such that
$$
\max\{e(A), e(A^c)\} \leq \frac{m  + \Delta(G) - \delta(G)}{4}
$$
where $\Delta(G)$ and $\delta(G)$ denote the maximum and minimum degree of the graph G.
\end{Th}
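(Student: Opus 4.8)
The plan is to reduce the statement to two clean conditions on a single balanced partition and then produce such a partition by a direct vertex-exchange (local-switching) argument. First I would record an exact identity. Fix a balanced partition $V(G)=A\cup A^c$, write $c=e(A,A^c)$ for the cut, and let $D_A=\sum_{u\in A}d(u)$ and $D_{A^c}=\sum_{v\in A^c}d(v)$ be the two degree sums. Counting degrees from each side gives $D_A=2e(A)+c$ and $D_{A^c}=2e(A^c)+c$, so that $e(A)+e(A^c)=m-c$ and $e(A)-e(A^c)=\tfrac12(D_A-D_{A^c})$. Since $\max\{x,y\}=\tfrac12(x+y)+\tfrac12|x-y|$, this yields
\[
\max\{e(A),e(A^c)\}=\frac{m-c}{2}+\frac{|D_A-D_{A^c}|}{4}.
\]
Consequently it suffices to exhibit one balanced partition with $c\ge m/2$ and $|D_A-D_{A^c}|\le\Delta-\delta$: the first bound forces $\tfrac{m-c}{2}\le m/4$, the second forces $\tfrac{|D_A-D_{A^c}|}{4}\le\tfrac{\Delta-\delta}{4}$, and these add to the required $\tfrac{m+\Delta-\delta}{4}$. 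That a large cut is available is reassuring: a uniformly random equipartition cuts each edge with probability $\tfrac{n}{2(n-1)}>\tfrac12$, so some equipartition has $c>m/2$.

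Next I would choose the partition by a global extremal principle. Since $\max\{e(A),e(A^c)\}=\tfrac14\Phi$ with $\Phi:=2(m-c)+|D_A-D_{A^c}|$, I simply take a balanced partition minimizing $\Phi$ (equivalently, minimizing $\max\{e(A),e(A^c)\}$ itself), and assume $e(A)\ge e(A^c)$. The engine is the effect of swapping a vertex $u\in A$ with a vertex $v\in A^c$, which keeps the partition balanced. Writing $\varepsilon_{uv}=1$ if $uv\in E(G)$ and $0$ otherwise, and letting $d_A,d_{A^c}$ denote neighbour counts into the two sides, such a swap changes $e(A)$ by $-\alpha$ and $e(A^c)$ by $+\beta$, where $\alpha=d_A(u)-d_A(v)+\varepsilon_{uv}$ and $\beta=d_{A^c}(u)-d_{A^c}(v)-\varepsilon_{uv}$. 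Minimality of $\max\{e(A),e(A^c)\}$ then says that for every such pair the new maximum $\max\{e(A)-\alpha,\,e(A^c)+\beta\}$ is at least the old value $e(A)$, i.e. $\alpha\le 0$ or $\beta\ge e(A)-e(A^c)$.

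Finally I would exploit these inequalities. Applying the "$\alpha\le0$" branch to the extremal pair consisting of a vertex $u\in A$ maximizing $d_A(u)$ and a vertex $v\in A^c$ minimizing $d_A(v)$, I expect to derive $\max_{u\in A}d_A(u)\le\min_{v\in A^c}d_A(v)$, whence $2e(A)=\sum_{u\in A}d_A(u)\le\sum_{v\in A^c}d_A(v)=c$. Feeding $2e(A)\le c$ back into the identity of the first step collapses the whole theorem to the single inequality $e(A)-e(A^c)\le\Delta-\delta$ on the internal imbalance, since $m-2c+(D_A-D_{A^c})=3e(A)-e(A^c)-c\le e(A)-e(A^c)$ once $2e(A)\le c$. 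The main obstacle is exactly this last inequality, together with the possibility that the extremal pair lands in the "$\beta\ge e(A)-e(A^c)$" branch rather than "$\alpha\le0$": the work is to show that an imbalance exceeding $\Delta-\delta$ would force a swap (again pitting a high-$d_A$ vertex of $A$ against a low-$d_A$ vertex of $A^c$) that strictly lowers $e(A)$ without letting $e(A^c)$ overshoot the old value $e(A)$, the slack $\Delta-\delta$ being precisely what rules out overshoot. Carrying out this exchange analysis cleanly—handling the $\varepsilon_{uv}$ terms and the boundary case $e(A)=e(A^c)$—is where the real difficulty lies; the elementary degree-sum benchmark (any even multiset of reals in $[\delta,\Delta]$ splits into two equal-size halves with sum-difference at most $\Delta-\delta$, proved by pairing sorted values and choosing signs greedily) confirms that $\Delta-\delta$ is the correct target.
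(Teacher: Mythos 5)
The paper offers no proof of this theorem to compare against: it is quoted verbatim from \cite{xu2010}, so your attempt has to stand entirely on its own, and as written it does not --- it is a proof \emph{plan} whose decisive steps are left unexecuted, as you yourself concede (``I expect to derive\dots'', ``the work is to show\dots'', ``where the real difficulty lies''). Your opening reduction is correct: the identity $\max\{e(A),e(A^c)\}=\frac{m-c}{2}+\frac{|D_A-D_{A^c}|}{4}$ and the sufficiency of the two conditions $c\ge m/2$ and $|D_A-D_{A^c}|\le\Delta-\delta$ check out, as do the swap formulas for $\alpha$ and $\beta$. But you verify the two conditions only on \emph{separate} partitions (a random equipartition for the cut, the sorted-pairing fact for the degree sums), whereas the theorem needs both on a \emph{single} balanced partition; arranging that simultaneity is the entire content of the result, and nothing in the proposal achieves it.

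The extremal-swap machinery does not close this gap. Minimality of $\max\{e(A),e(A^c)\}$ yields only the disjunction ``$\alpha\le 0$ or $\beta\ge e(A)-e(A^c)$'' for each pair, and your route to $2e(A)\le c$ tacitly assumes the first branch holds for the extremal pair. That assumption can fail: take $G=K_3\cup 3K_1$ (a triangle plus three isolated vertices). The optimal balanced partition puts two triangle vertices in $A$ and one in $A^c$, with $e(A)=1$, $e(A^c)=0$; for $u\in A$ a triangle vertex and $v\in A^c$ isolated one has $\alpha=1>0$, the swap being blocked by the other branch ($\beta=1\ge e(A)-e(A^c)$), and indeed $\max_{u\in A}d_A(u)=1>0=\min_{v\in A^c}d_A(v)$, so the degree comparison you ``expect to derive'' is false at an optimum (the theorem itself of course holds there: $1\le\frac{3+2}{4}$). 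Finally, the inequality $e(A)-e(A^c)\le\Delta-\delta$ at the minimizer --- to which you correctly reduce the theorem granted $2e(A)\le c$ --- is precisely what is never proved; the multiset-splitting benchmark does not supply it, because $e(A)-e(A^c)=\frac{D_A-D_{A^c}}{2}$ must be controlled on the same partition that already controls the cut. So the proposal contains a sound reduction and sound local formulas, but the theorem's actual proof is missing.
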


\begin{Cor}
     Let $G$ be an $n$-vertex graph where n is an even number. If $|E(G)|\leq (4m_0-\epsilon)n^2$ with $\epsilon>0$ and $n>1/\epsilon$, then there exists a balanced $2$-partition $A\cup A^c$ such that $\max\{e(A),e(A^c)\}<m_0 n^2$.
\end{Cor}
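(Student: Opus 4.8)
The plan is to derive this corollary directly from Theorem 3.5 (the Xu bound) by controlling the degree spread $\Delta(G) - \delta(G)$ crudely and then exploiting the hypothesis $n > 1/\epsilon$ to absorb the resulting lower-order term. First I would apply Theorem 3.5 to obtain a balanced $2$-partition $V(G) = A \cup A^c$ satisfying
$$
\max\{e(A), e(A^c)\} \leq \frac{m + \Delta(G) - \delta(G)}{4},
$$
where $m = |E(G)| \leq (4m_0 - \epsilon)n^2$.

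Next I would bound the degree spread by the trivial estimate $\Delta(G) - \delta(G) \leq \Delta(G) \leq n - 1$, since every vertex of an $n$-vertex simple graph has degree at most $n-1$. Substituting both this bound and the edge-count hypothesis into the displayed inequality gives
$$
\max\{e(A), e(A^c)\} \leq \frac{(4m_0 - \epsilon)n^2 + (n-1)}{4} = m_0 n^2 - \frac{\epsilon n^2 - (n-1)}{4}.
$$
The remaining task is simply to check that the subtracted term is strictly positive, i.e. that $\epsilon n^2 > n - 1$.

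Finally I would close the argument using $n > 1/\epsilon$, which is equivalent to $\epsilon > 1/n$. Since $\tfrac{n-1}{n^2} < \tfrac{1}{n} < \epsilon$, we get $\epsilon n^2 > n - 1$, so the subtracted term $\frac{\epsilon n^2 - (n-1)}{4}$ is strictly positive and therefore $\max\{e(A), e(A^c)\} < m_0 n^2$, as required. There is no genuine obstacle here: the entire content is the one application of Theorem 3.5 followed by elementary estimation. The only point demanding a little care is preserving the \emph{strict} inequality, which is exactly what the strict hypothesis $n > 1/\epsilon$ is designed to supply; a non-strict hypothesis $n \geq 1/\epsilon$ would only yield $\epsilon n^2 \geq n > n-1$, which in fact still suffices, so the bound is comfortable.
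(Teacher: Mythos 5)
Your proposal is correct and is essentially the paper's intended argument: the paper states the corollary immediately after Theorem 3.5 without writing out a proof, and the derivation it implicitly relies on is exactly your chain $\max\{e(A),e(A^c)\}\leq \frac{m+\Delta(G)-\delta(G)}{4}\leq \frac{(4m_0-\epsilon)n^2+(n-1)}{4}<m_0n^2$, using $n>1/\epsilon$ to ensure $\epsilon n^2>n-1$. No gaps.
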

 In this paper, we set $m_0=0.074, \epsilon=10^{-5}$ in Section 4, and $m_0=\frac{5}{72}, \epsilon=0.001$ in Section 5. 

We also need the following theorems to further deal with the triangle-free graphs.
\begin{Th}\cite{balogh2021}
     Let $H$ be a triangle-free graph on $n$ vertices. If $|E(H)| \geq 0.3197\binom{n}{2}$ with $n$ large enough, $$D_2(H) \leq \frac{n^2}{25}.$$  
\end{Th}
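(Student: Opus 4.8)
The plan is to recast the target in max-cut language and then reduce to a single structured case. For any graph $H$ one has $D_2(H) = e(H) - \mathrm{mc}(H)$, where $\mathrm{mc}(H)$ is the maximum size of an edge cut; equivalently $D_2(H)$ is the least number of edges whose deletion makes $H$ bipartite. So the claim is that a triangle-free $H$ with $e(H) \ge 0.3197\binom{n}{2}$ can be made bipartite by deleting at most $n^2/25$ edges. The configuration I would keep in mind throughout is the balanced blow-up $C_5^{n/5}$: it has $n^2/5$ edges (density $2/5 > 0.3197$), and deleting the edges between one adjacent pair of parts leaves a blow-up of the path $P_5$, hence bipartite; the smallest such part-pair carries $(n/5)^2 = n^2/25$ edges, so $D_2(C_5^{n/5}) = n^2/25$. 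This is the extremal example, and the entire difficulty is to show nothing beats it.

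The core case is when $H$ is homomorphic to $C_5$, i.e.\ $H$ is a subgraph of a blow-up of $C_5$ with part sizes $x_1,\dots,x_5$ and $\sum_i x_i = n$. Deleting the edges between one adjacent pair of parts makes $H$ bipartite, so $D_2(H) \le \min_i x_i x_{i+1}$ (indices mod $5$). I would bound this minimum by AM--GM: since $\prod_{i=1}^5 (x_i x_{i+1}) = \big(\prod_{i=1}^5 x_i\big)^2 \le (n/5)^{10} = (n^2/25)^5$, the geometric mean of the five products $x_i x_{i+1}$ is at most $n^2/25$, whence $\min_i x_i x_{i+1} \le n^2/25$, with equality only at the balanced point. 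This disposes of the homomorphic-to-$C_5$ case cleanly and confirms $C_5^{n/5}$ as extremal.

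Next I would reduce the general dense case to this core case by a minimum-degree cleaning argument. By the Andr\'asfai--Erd\H{o}s--S\'os theorem a triangle-free graph with $\delta > 2n/5$ is bipartite, and by Jin's refinement a triangle-free graph with $\delta > \tfrac{10}{29}n$ is homomorphic to $C_5$. I would therefore iteratively discard vertices whose degree falls below the relevant threshold, charging the edges lost, until the surviving core $H'$ on $n'$ vertices satisfies $\delta(H') > \tfrac{10}{29}n'$ and so is homomorphic to $C_5$; the edges inside the core are controlled by the previous paragraph, and the edges removed during cleaning are accounted for separately. The density hypothesis $e(H) \ge 0.3197\binom{n}{2}$ is exactly what should force the cleaning to remove few enough edges that the two contributions together stay below $n^2/25$.

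The main obstacle is sharpness. Both steps leak a little—cleaning deletes some edges, and the core estimate is tight only at the balanced blow-up—so arranging that the two losses sum to at most the tight constant $n^2/25$ uniformly across the whole density window $[0.3197,\,1/2]$ is delicate, and this is precisely why the published proof rests on a flag-algebra semidefinite computation to certify the bound (and to pin down the exact threshold $0.3197$) rather than on the structural estimates above. I would thus expect the structural reduction to deliver the correct qualitative picture and the right extremal graph, while the quantitatively tight inequality would require either the SDP certificate or a stability strengthening showing that any $H$ far from $C_5^{n/5}$ has strictly smaller $D_2$.
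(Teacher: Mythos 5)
This statement is not proved in the paper at all: it is quoted verbatim as Theorem 3.7 with a citation to Balogh, Clemen and Lidick\'y, and the actual proof in that reference is a flag-algebra/semidefinite-programming computation. So the only question is whether your structural argument could stand on its own, and it cannot. Your treatment of the core case is fine: if $H$ sits inside a blow-up of $C_5$ with parts $x_1,\dots,x_5$, then deleting one adjacent part-pair leaves a $P_5$-blow-up, and the AM--GM step $\prod_i x_ix_{i+1}=\bigl(\prod_i x_i\bigr)^2\le (n/5)^{10}$ correctly gives $\min_i x_ix_{i+1}\le n^2/25$, with $C_5^{n/5}$ extremal. The genuine gap is the reduction. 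The hypothesis $e(H)\ge 0.3197\binom{n}{2}$ only gives average degree about $0.3197\,n$, which is \emph{below} $n/3$ and a fortiori below Jin's threshold $\tfrac{10}{29}n\approx 0.3448\,n$. A near-regular triangle-free graph at this density has no vertex above the threshold, so your min-degree cleaning never produces a nonempty core and instead charges essentially all $\approx 0.16\,n^2$ edges to the cleaning loss, far exceeding $n^2/25$. Worse, by Hajnal-type constructions there exist triangle-free graphs with minimum degree $(1/3-\varepsilon)n$ (hence satisfying the edge hypothesis) and arbitrarily large chromatic number; these are nowhere near homomorphic to $C_5$, so no amount of bookkeeping rescues the reduction. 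The structural route via Andr\'asfai--Erd\H{o}s--S\'os/Jin simply does not reach down to density $0.3197$.

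You candidly acknowledge this at the end, deferring the quantitative step to the SDP certificate of the cited paper. That honesty is to your credit, but it means the proposal establishes the theorem only for graphs homomorphic to $C_5$ and identifies the extremal example, while the actual statement for general dense triangle-free graphs remains unproved by your argument. As used in this paper the theorem is an imported black box, so nothing here needs repair in the source; but your writeup should not be mistaken for a proof of Theorem 3.7.
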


\begin{Th}\cite{erdos1988}
     For a $K_3$-free graph with $n$ vertices and $m$ edges,
    $D_2(G) \leq  m-\frac{4 m^2}{n^2}$.
\end{Th}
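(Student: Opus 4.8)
The plan is to reformulate the statement in terms of the maximum cut. Since $D_2(G)$ equals the minimum number of edges one must delete to make $G$ bipartite (as noted in the introduction), we have $D_2(G)=m-\mathrm{mc}(G)$, where $\mathrm{mc}(G)$ denotes the maximum number of edges crossing a $2$-partition $V(G)=A\cup A^c$; indeed, deleting the non-crossing edges of a partition $(A,A^c)$ makes $G$ bipartite, and their number is $e(A)+e(A^c)=m-e(A,A^c)$. Thus the claimed inequality $D_2(G)\le m-\frac{4m^2}{n^2}$ is equivalent to the max-cut lower bound $\mathrm{mc}(G)\ge \frac{4m^2}{n^2}$, and it suffices to exhibit a single $2$-partition whose cut is at least this large.

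The key structural observation is that, because $G$ is $K_3$-free, every neighborhood $N(v)$ is an independent set: if two neighbors of $v$ were adjacent, they would form a triangle with $v$. I would therefore consider, for each vertex $v$, the partition $\bigl(N(v),\, V(G)\setminus N(v)\bigr)$. Since $N(v)$ is independent, no edge lies inside $N(v)$, so every edge incident to $N(v)$ crosses this partition; counting from the $N(v)$ side, the number of crossing edges is exactly $\sum_{u\in N(v)} d(u)$.

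Next I would average this quantity over the choice of $v$ and apply a double-counting identity. Since some vertex attains at least the average value,
$$\mathrm{mc}(G)\ \ge\ \max_{v}\sum_{u\in N(v)}d(u)\ \ge\ \frac1n\sum_{v\in V(G)}\sum_{u\in N(v)}d(u)\ =\ \frac1n\sum_{u\in V(G)}d(u)^2,$$
where the last equality holds because each $u$ is counted once for every $v$ adjacent to it, i.e.\ exactly $d(u)$ times. Finally, applying the Cauchy--Schwarz inequality (equivalently the power-mean inequality) together with $\sum_u d(u)=2m$ gives $\sum_u d(u)^2\ge \frac{(2m)^2}{n}=\frac{4m^2}{n}$, whence $\mathrm{mc}(G)\ge \frac{4m^2}{n^2}$, which is the desired bound.

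The argument is short, and I do not anticipate a serious obstacle; the only genuine idea is the choice of bipartition. The crucial point to get right is that triangle-freeness is precisely what forces each $N(v)$ to be independent, so that the cut value equals the clean sum $\sum_{u\in N(v)}d(u)$ with no correction for internal edges. Once this is established, the averaging step and the final Cauchy--Schwarz estimate are entirely routine, and translating back through $D_2(G)=m-\mathrm{mc}(G)$ completes the proof.
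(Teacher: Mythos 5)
Your proof is correct. The paper states this result (Theorem 3.8) only as a citation to Erdős, with no proof of its own, and your argument --- rewriting $D_2(G)=m-\mathrm{mc}(G)$, cutting along the independent set $N(v)$ (independence being exactly where triangle-freeness enters), and then averaging via the double-counting identity $\sum_{v}\sum_{u\in N(v)}d(u)=\sum_{u}d(u)^2$ followed by Cauchy--Schwarz to get $\mathrm{mc}(G)\ge \frac{1}{n}\sum_u d(u)^2\ge \frac{4m^2}{n^2}$ --- is precisely the classical proof from the cited source, with every step (including the edge cases) checking out.
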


\section{A new upper bound for $D_{2,\infty}^b(G)$}

In this section we assume that the order of $G$ is even and large enough. We will prove Theorem 1.8 by considering two cases based on the independent number of $G$.

\begin{lemma}
Let $G$ be a $K_4$-free graph on $n$ vertices, where $n>10^5$. If $G$ contains an independent set $I$ of size $0.28n$, then
    $$
        D_{2,\infty}^b(G)< 0.074n^2.
    $$
\end{lemma}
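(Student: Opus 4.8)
The plan is to exploit the large independent set $I$ of size $0.28n$ to bound the total number of edges, and then invoke Corollary 3.6 to obtain a balanced partition with the desired property. First I would apply Lemma 3.3, the $K_4$-free estimate for graphs with a large independent set. Since $|I|=0.28n < \frac{1}{3}|G|$, the relevant branch of Lemma 3.3 is the minimum expression $e(G)\le \min\{\frac{|Z|(|G|+|I|)}{2}, |Z|(|G|-\frac{3}{4}|Z|)\}$, where $Z$ is a largest triangle-free induced subgraph of $G\setminus I$. The key point is that $|Z|\le |G\setminus I| = (1-0.28)n = 0.72n$, so I would substitute this bound into the second term $|Z|(|G|-\frac{3}{4}|Z|)$. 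The function $x(n-\frac{3}{4}x)$ is decreasing for $x\ge \frac{2}{3}n$, but $0.72n > \frac{2}{3}n$, so plugging in $|Z|=0.72n$ gives the largest value along this branch and hence a valid upper bound: $e(G)\le 0.72n(n-\frac34\cdot 0.72n)=0.72n\cdot(1-0.54)n=0.72\cdot 0.46\,n^2=0.3312\,n^2$.

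Next I would check that this edge count is small enough to feed into Corollary 3.6. That corollary states that if $e(G)\le (4m_0-\epsilon)n^2$ with $n>1/\epsilon$, then there is a balanced partition with $\max\{e(A),e(A^c)\}<m_0 n^2$. Setting $m_0=0.074$ and $\epsilon=10^{-5}$ as indicated in the paper, the threshold is $(4\cdot 0.074-10^{-5})n^2=(0.296-0.00001)n^2\approx 0.29599\,n^2$. Here is where I anticipate the main difficulty: my crude bound $0.3312\,n^2$ exceeds $0.296\,n^2$, so the naive substitution $|Z|=0.72n$ is \emph{not} tight enough. I would therefore need to extract more from the structure. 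The natural refinement is to not treat the independent set $I$ as part of an undifferentiated vertex set, but to account separately for the edges incident to $I$ and the edges inside $G\setminus I$, using the sharper triangle-free bounds of Lemma 3.2 on $G\setminus I$ together with a better estimate on $e(I,G\setminus I)$.

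The sharper argument I expect to carry out is as follows. Edges from $I$ to the rest satisfy $e(I,G\setminus I)\le |Z||I|$ since every vertex of $I$ has its neighborhood inside the triangle-free graph $G\setminus I$, hence degree at most $|Z|$. Meanwhile $e(G\setminus I)\le |Z|(|G\setminus I|-\frac34|Z|)$ by Lemma 3.2. Writing $t=|Z|/n$ and $a=|I|/n=0.28$, the total is $e(G)\le n^2\big[t\,a + t\big((1-a)-\tfrac34 t\big)\big]=n^2\,t\big(1-\tfrac34 t\big)$, which is maximized over $t\in[0,\,1-a]$ either at the critical point $t=\frac23$ or at the endpoint $t=1-a=0.72$. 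The unconstrained maximum of $t(1-\frac34 t)$ occurs at $t=\frac23$, giving value $\frac13$; but $\frac23 < 0.72$, so the true constrained maximum on $[0,0.72]$ is at the interior critical point $t=\frac23$, yielding again $e(G)\le \frac13 n^2\approx 0.3333\,n^2$, which is even worse. This confirms that the first branch $\frac{|Z|(|G|+|I|)}{2}$ must be the operative one for controlling the worst case: it gives $e(G)\le \frac{t(n+an)}{2}n = \frac{t(1+a)}{2}n^2$, which is small precisely when $t$ is small, whereas the second branch is small when $t$ is large, and taking the minimum of the two over all admissible $t$ gives the genuine bound. The crux is to optimize $\min\{\frac{t(1+a)}{2},\, t(1-\frac34 t)\}$ over $t\le 1-a$; the minimum is largest at the crossover point where the two expressions coincide, namely $\frac{1+a}{2}=1-\frac34 t$, i.e. $t=\frac{4}{3}(1-\frac{1+a}{2})=\frac{2(1-a)}{3}$. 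Substituting $a=0.28$ gives $t=\frac{2\cdot 0.72}{3}=0.48$ and the bound $e(G)\le \frac{0.48\cdot 1.28}{2}n^2=0.3072\,n^2$. This is still slightly above $0.296\,n^2$, so the final step will require pushing the analysis further, most likely by also using the freedom to shrink the contribution of $I$ in the balanced partition directly via Proposition 3.4 (redistributing vertices of the independent set $I$ across the two halves so that few edges land inside either part) rather than relying on Corollary 3.6 as a black box. I expect the genuinely delicate part of the proof to be this last quantitative optimization — combining the edge-count bound with a tailored placement of $I$ to bring $\max\{e(A),e(A^c)\}$ strictly below $0.074n^2$.
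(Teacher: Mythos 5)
Your global edge-counting route is correctly diagnosed as insufficient, but the proposal stops exactly where the real proof begins, so there is a genuine gap. You compute that the best uniform bound over all possible sizes $|Z|=tn$ of the largest triangle-free induced subgraph of $G\setminus I$ is $\min\{\tfrac{t(1+a)}{2},\,t(1-\tfrac34 t)\}\le 0.3072\,n^2$ at the crossover $t=0.48$, which exceeds the Corollary~3.6 threshold $0.29599\,n^2$ --- all of this is accurate. The missing idea is that one should \emph{not} take a worst case over $t$: instead, split into two cases on $|Z|$ and use completely different arguments in each. If $|Z|\le 0.45n$, the first branch of Lemma~3.3 alone gives $e(G)\le \tfrac{0.45n(1.28n)}{2}=0.288n^2<0.29599n^2$, so Corollary~3.6 applies directly. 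If $|Z|>0.45n$, no bound on $e(G)$ is attempted at all; rather, one extracts a triangle-free induced subgraph $Z_0\subset G\setminus I$ with $|Z_0|=0.45n$ and builds an explicit balanced partition with $I\subset A$ and $Z_0\subset A^c$ (possible since $I$ and $Z_0$ are disjoint and each has size at most $n/2$). Then $e(A)\le\tfrac14(|A|-|I|)(|A|+3|I|)=\tfrac14(0.22n)(1.34n)=0.0737n^2$ by inequality (7) of Lemma~3.3, because $|I|=0.28n>|A|/3$, and $e(A^c)\le |Z_0|(|A^c|-\tfrac34|Z_0|)=0.073125n^2$ by inequality (5) of Lemma~3.1, because $|Z_0|\ge\tfrac23|A^c|$.

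Your closing suggestion --- redistributing $I$ via Proposition~3.4 to control both halves --- points in roughly the right direction (an explicit partition rather than Corollary~3.6 as a black box), but it is not the mechanism that closes the argument: the paper's construction places all of $I$ on one side and leans on the large triangle-free set $Z_0$ on the other side, and the dichotomy on $|Z|$ is the essential structural step you did not identify. As written, the proposal does not establish the lemma.
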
  
\begin{proof}
    Let $Z$ be a maximal triangle-free induced subgraph in $G\setminus I$ and $|Z|=z n$.

\textbf{Case 1.1}: $|Z|\leq 0.45n$. By (6) of Lemma 3.3, 
    $$
    \begin{aligned}
        e(G) &\leq \frac{|Z|(|G|+|I|)}{2} \\
             &= (0.45n\cdot 0.64n)= 0.288n^2
    \end{aligned}
    $$     
Therefore, by Corollary 3.6, $D_{2,\infty}^b(G)< 0.074 n^2$. 

\textbf{Case 1.2}: $|Z|> 0.45n$. Now, there exists a triangle-free induced graph $Z_0\in G\setminus I$ of size $0.45n$. Choose a vertex subset $A$ of size $n/2$ such that $A\supset I$ and $A^c\supset Z_0$.    
    
    Since $0.28n>\frac{|A|}{3}$, by (7) of Lemma 3.3, $e(A)\leq \frac{1}{4}(|A|-|I|)(|A|+3|I|)=0.0737n^2<0.074n^2$. On the other hand, since the triangle-free graph $Z_0\subset A^c$ and $|Z_0|\geq \frac{2}{3}|A^c|$, by ,
    $$
    e(A^c)\leq |Z_0|(|A^c|-\frac{3}{4}|Z_0|)=(0.45n\cdot (0.45n-\frac 3 4\cdot 0.45n))=0.073125 n^2.
    $$
Therefore, $\max\{e(A),e(A^c)\}<0.074n^2$.
    
\end{proof}

According to Corollary 3.6, we only need to consider the graph whose edges are more than $0.02959n^2$. Hence, we introduce the following lemma.
\begin{lemma}
     Let $G$ be a graph with $n$ vertices and $m$ edges. There exists an edge $v_1v_2$ such that 
     \begin{equation}
        d(v_1)+d(v_2)\geq \frac{4m}{n}.
     \end{equation}
    If $G$ is $K_4$-free and $m\geq \frac{n^2} 4$, the independent number of $G$ satisfies $\alpha(G)\geq \frac{4m}n -n.$ 
\end{lemma}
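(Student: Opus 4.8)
The plan is to prove the two assertions separately, since the first is a standard averaging argument and the second is a consequence of the first combined with the $K_4$-free structure.

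For the first inequality, I would argue by averaging the quantity $d(v_1)+d(v_2)$ over all edges. Summing $d(u)+d(w)$ over every edge $uw\in E(G)$ counts, for each vertex $v$, its degree once for each incident edge, so the total equals $\sum_{v\in V(G)} d(v)^2$. By Cauchy--Schwarz (or power-mean), $\sum_{v} d(v)^2 \geq \frac{1}{n}\bigl(\sum_v d(v)\bigr)^2 = \frac{(2m)^2}{n} = \frac{4m^2}{n}$. Dividing this total by the number of edges $m$ gives an average value of at least $\frac{4m^2/n}{m} = \frac{4m}{n}$. Since the maximum is at least the average, there must exist an edge $v_1v_2$ with $d(v_1)+d(v_2)\geq \frac{4m}{n}$, which is exactly (9).

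For the second assertion, I would exploit the $K_4$-freeness. Fix the edge $v_1v_2$ from the first part. Because $G$ is $K_4$-free, the edge $v_1v_2$ together with a common neighbor already forms a triangle, so the set $N(v_1)\cap N(v_2)$ can contain no edge; more usefully, I would look at $N(v_1)\cup N(v_2)$. The key observation is that since $\{v_1,v_2\}$ is an edge, any vertex adjacent to both $v_1$ and $v_2$ lies in a triangle with them, and the common neighborhood $N(v_1)\cap N(v_2)$ must itself be an independent set (otherwise an edge there would complete a $K_4$ with $v_1,v_2$). I would then estimate $|N(v_1)\cup N(v_2)| = d(v_1)+d(v_2)-|N(v_1)\cap N(v_2)|\geq \frac{4m}{n}-|N(v_1)\cap N(v_2)|$ and combine this with the independence of the common neighborhood to extract a large independent set. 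The cleanest route is to note that the common neighborhood is independent and has size at least $d(v_1)+d(v_2)-n\geq \frac{4m}{n}-n$, since $|N(v_1)\cup N(v_2)|\leq n$ forces the overlap to be at least $d(v_1)+d(v_2)-n$.

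The hypothesis $m\geq \frac{n^2}{4}$ guarantees $\frac{4m}{n}-n\geq 0$, so this bound is nonvacuous, giving $\alpha(G)\geq \frac{4m}{n}-n$ as claimed. The main obstacle I anticipate is verifying that the common neighborhood $N(v_1)\cap N(v_2)$ is genuinely independent: this is where $K_4$-freeness is essential, since any edge $xy$ with $x,y\in N(v_1)\cap N(v_2)$ would yield the four mutually adjacent vertices $v_1,v_2,x,y$, a copy of $K_4$. Once this structural point is secured, the size estimate via inclusion--exclusion is routine.
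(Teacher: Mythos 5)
Your proposal is correct and follows essentially the same route as the paper: a Cauchy--Schwarz averaging of $d(u)+d(w)$ over edges for the first claim, and for the second, the observation that $N(v_1)\cap N(v_2)$ is independent by $K_4$-freeness together with the inclusion--exclusion bound $|N(v_1)\cap N(v_2)|\geq d(v_1)+d(v_2)-n$. Your added remark that $m\geq n^2/4$ makes the bound nonvacuous is a harmless clarification not present in the paper.
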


\begin{proof}[Proof]
    By Cauchy-Schwarz inequality,
    $$
    \begin{aligned}
        \sum\limits_{ab\text{ is an edge}}(d(a)+d(b))
        =\sum\limits_{v\in V(G)}d(v)^2\geq \frac{\left(\sum\limits_{v\in V(G)}d(v)\right)^2}{n}=\frac{4m^2}n.
    \end{aligned}
    $$
    Therefore, there exists an edge $v_1 v_2$ such that $d(v_1)+d(v_2)\geq \frac{\sum\limits_{ab\text{ is an edge}}(d(a)+d(b))}{e(G)}=\frac{4m}{n}.$

    Furthermore, if $G$ is $K_4$-free, $N(v_1)\cap N(v_2)$ is an independent set. Therefore, $|N(v_1)\cap N(v_2)|\geq |N(v_1)|+|N(v_2)|-|V(G)|$, which implies $\alpha(G)\geq \frac{4m}{n}-n.$
\end{proof}

By Lemma 4.2, $e(G)\geq 0.02959n^2$ implies $\alpha(G)\geq \frac{4m}{n}-n\geq 0.1836n$ when $G$ is $K_4$-free. 

\begin{lemma}
Let $G$ be a $K_4$-free graph on $n$ vertices, where $n>10^5$. If the independent number of $G$ satisfies $\alpha(G)<0.28n$ and $e(G)\geq 0.02959n^2$, then 
    $$
        D_{2,\infty}^b(G)< 0.074n^2.
    $$
\end{lemma}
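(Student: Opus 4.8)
The plan is to handle the complementary regime to Lemma 4.1: here the independent number is bounded above by $0.28n$, so we cannot pull out a large independent set, but the edge-count lower bound $e(G)\geq 0.02959n^2$ combined with Lemma 4.2 guarantees $\alpha(G)\geq 0.1836n$. The strategy is to exploit a large \emph{triangle-free} induced subgraph $Z$ of $G$ (which exists because $G$ is $K_4$-free: the neighborhood of any vertex is triangle-free), split into cases according to the size $|Z|=zn$, and in each case build an explicit balanced partition whose two sides each span fewer than $0.074n^2$ edges. First I would fix $Z$ to be a maximal triangle-free induced subgraph and record the two bounds from Lemma 3.1, namely $e(G)\leq |Z||G|/2$ and $e(G)\leq |Z|(|G|-\tfrac34|Z|)$; the first is useful when $z$ is small and the second when $z$ is large.

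The main work is the case analysis on $z$. When $z$ is small (roughly $z\leq 0.45$ or so), the bound $e(G)\leq \tfrac12 z n\cdot n$ already forces $e(G)$ small enough that Corollary 3.6 produces a balanced partition with $\max\{e(A),e(A^c)\}<0.074n^2$ directly, with no further construction needed. When $z$ is large, I would instead \emph{use} the triangle-free structure: place a large chunk of $Z$ on one side $A^c$ so that $Z_0\subset A^c$ with $|Z_0|\geq \tfrac23|A^c|$, and then bound $e(A^c)\leq |Z_0|(|A^c|-\tfrac34|Z_0|)$ by Lemma 3.1(2). For the other side $A$, I would need either the $K_4$-free degree bound or a triangle-free/independent-set estimate, and here the constraint $\alpha(G)<0.28n$ should be invoked to control how the remaining vertices distribute. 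Since $Z$ itself is triangle-free, Theorems 3.7 and 3.8 (the Balogh--Clemen--Lidický and Erdős bounds on $D_2$ of triangle-free graphs) become available to further sparsify whichever half inherits the triangle-free part, and Proposition 3.4 lets me move a prescribed number of vertices between the two sides while controlling the cross edges $e(Y,C)$, which is what makes the partition \emph{balanced} without blowing up either $e(A)$ or $e(A^c)$.

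I expect the hard part to be the intermediate range of $z$, where neither the crude edge bound alone suffices for Corollary 3.6 nor does a single triangle-free half land below $0.074n^2$ cleanly; there one must combine a triangle-free placement on one side with an application of Theorem 3.7 or 3.8 to the triangle-free portion on the other side, and simultaneously use $\alpha(G)<0.28n$ to cap the density of the non-triangle-free leftover. Balancing these competing estimates so that \emph{both} $e(A)$ and $e(A^c)$ stay strictly below $0.074n^2$ across the whole intermediate window is the delicate calculation, and it is where the specific constant $0.074$ (rather than the conjectured $1/16$) is presumably forced. The role of the hypothesis $e(G)\geq 0.02959n^2$ is purely to let us assume a dense graph — Corollary 3.6 disposes of all sparser graphs immediately — so I would state that reduction first and then carry out the case analysis on $z$ under the standing density assumption.
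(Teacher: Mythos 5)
There is a genuine gap. Your outline organizes the case analysis around a single maximum triangle-free induced subgraph $Z$ of $G$, but this gives you no way to bound the half of the partition that does \emph{not} receive the large triangle-free piece. In the ``large $z$'' case you place $Z_0\subset A^c$ and correctly get $e(A^c)\leq |Z_0|(|A^c|-\tfrac34|Z_0|)$, but for $e(A)$ you appeal to ``the constraint $\alpha(G)<0.28n$ to cap the density of the non-triangle-free leftover'' --- an \emph{upper} bound on the independence number cannot cap edge density from above (quite the opposite), and the guaranteed independent set of size $\approx 0.18n$ is too small for the bound $\tfrac14(|A|-|I|)(|A|+3|I|)$ of Lemma 3.3 to drop below $0.074n^2$ (it gives about $0.083n^2$). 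So in the regime where $Z$ is large but not enormous, your scheme bounds one side and leaves the other uncontrolled; you acknowledge the intermediate range is ``the delicate calculation'' but do not supply it, and the tools you name for it (Theorems 3.7 and 3.8, which bound $D_2$ of triangle-free graphs) are not used in the paper's proof of this lemma and do not obviously convert into a bound on $e(A)$ for one prescribed half.

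The missing structural idea is that the density hypothesis is used \emph{positively}, not merely to invoke Corollary 3.6 on sparse graphs: by Lemma 4.2 there is an edge $v_1v_2$ with $d(v_1)+d(v_2)\geq 1.18n$, so one gets \emph{two} large triangle-free sets $N_1=N(v_1)$ and $N_2=N(v_2)$ whose intersection is an independent set, hence of size strictly between $0.18n$ and $0.28n$; moreover every vertex of $N_1$ (or $N_2$) has at most $0.28n$ neighbors inside it, since such neighborhoods-within-neighborhoods are independent in a $K_4$-free graph. The paper's case analysis is on $d(v_2)/n=a_2+c$: if it is small then $d(v_1)\geq 0.79n$ and a direct edge count plus Corollary 3.6 finishes; if both $d(v_1),d(v_2)\geq 0.45n$ then each half of the partition can be given a $0.45n$-vertex triangle-free induced subgraph and Lemma 3.1(2) bounds both sides; and in the intermediate window $0.39<a_2+c<0.45$ the half $A$ is assembled as $N_2$ together with a piece $Y\subset N_1\setminus N_2$ chosen via Proposition 3.4 to control $e(Y,N_1\cap N_2)$, followed by explicit optimization of the resulting polynomials in $c$ and $t=a_2+c$. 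Without the two-neighborhood structure (and the resulting $0.28n$ degree caps inside $N_1$ and $N_2$), the bound $0.074n^2$ is not reachable by the route you describe.
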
 

\begin{proof}
    When $e(G)>0.02959n^2$, Lemma 4.2 ensures that $\alpha(G)>0.18 n$ and there exists an edge $v_1 v_2$ such that $d(v_1)+d(v_2)\geq 1.18n$. We donate $N_1=N(v_1),N_2=N(v_2)$. Let $|N_1\setminus N_2|=a_1 n$, $|N_2\setminus N_1|=a_2 n$ and $|N_1\cap N_2|=c n$, then
    
    \begin{equation}
    \begin{cases}
         & a_1+a_2+2c\geq 1.18\\
         & a_1+a_2+c\leq 1   \\
         & 0.18< c < 0.28
    \end{cases}
    \end{equation}
    
    Without loss of generality, let $a_1\geq a_2$ which implies $a_1+c\geq \frac{1+0.18}{2}=0.59$ and $d(v_1)\geq d(v_2)$. In addition, since $N_1$ is triangle-free, for any $v\in N_1$, $N_{N_1}(v)$ is an independent set. Hence, $d_{N_1}(v)< 0.28n$. Similarly, $d_{N_2}(v)< 0.28n$.  
\\
\textbf{Case 3.1:}$a_2+c\leq 0.39$. 

This implies $a_1+c\geq 0.79$. Note that $e(N_1)= \frac 1 2\sum_{v\in N_1} d_{N_1}(v)\leq 0.14n\cdot |N_1|$. In addition, by Turán's theorem, the $K_4$-free graph $G- N_1$ contains at most $\frac1 3|V(G)\setminus N_1|^2$ edges. Therefore,

$$
\begin{aligned}
    e(G) & \leq e(N_1)+e(N_1,V(G)\setminus N_1))+e(G\setminus N_1) \\
    &\leq n^2(0.14(a_1+c)+(a_1+c)(1-a_1-c)+\frac 1 3 (1-a_1-c)^2)\\
    &=n^2\left(-\frac{2}{3}(a_1+c)^2+(\frac{1}{3}+0.14)(a_1+c)+\frac{1}{3}\right)\\
    &\leq n^2\left((-\frac{2}{3})\cdot0.79^2+(\frac{1}{3}+0.14)\cdot0.79+\frac{1}{3}\right) =0.2912 n^2.
\end{aligned}
$$
This implies $D_{2,\infty}^b(G)\leq 0.074n^2$ by Corollary 3.6. The last inequality holds because the derivative is below 0 when $d(v_1)\geq 0.79n$.\\
\textbf{Case 3.2:} $a_2+c\geq 0.45$. 

By (9), $a_1+a_2+c\geq 0.9$. In addition, $a_1+c\geq a_2+c\geq 0.45$, so we can choose a vertex subset $A$ of size $n/2$ such that $|A\cap N_1|=0.45n$ and $|A^c\cap N_1|=0.45n$. Therefore, $G[A]$ and $G[A^c]$ contains a triangle-free $0.45n-$vertex induced graph. By (5) of Lemma 3.1, $\max\{e(A),e(A^c) \}\leq (0.45n\cdot (0.5n-3/4\cdot 0.45n))<0.074 n^2.$
\\
\textbf{Case 3.3:} $0.39<a_2+c<0.45$. 

\textbf{Case 3.3.1:} $0.18<c\leq 0.23$. Here, $a_1\geq 1.18-a_2-2c> 0.5$.

Since $d_{N_1}(x)\leq 0.28n,d_{N_2}(x)\leq 0.28n$, $e(N_1\cap N_2, N_1\setminus N_2)=\sum_{v\in V(N_1\cup N_2)}d(v) \leq 0.28c n^2$. Since $n/2-|N_2|=(0.5-a_2-c)n\in(0.05n,0.11n)\subset(0,a_1]$, by Proposition 3.4 (let $B=N_1\setminus N_2$, $C=N_1\cap N_2$ and $p=n/2-|N_2|$), there exists a vertex subset $Y$ of size $(n/2-|N_2|)$ such that $N(v_2)\subset A\subset N(v_1)\cup N(v_2)$ and
$$e(Y,N_1\cap N_2)\leq \frac{\frac 1 2-a_2-c}{a_1} 0.28c n^2.$$

Let $A=Y\cup (N_2\setminus N_1)$. Since $N_1$ and $N_2$ are triangle-free, by Turán's Theorem, 
$$
\begin{aligned}
    e(A)\leq & e(N_1\cap N_2,Y)+e(N_2\setminus N_1,Y)+e(A\setminus N(v_2))+e(N(v_2))\\
    \leq & n^2\left(0.28c \frac{\frac 1 2-a_2-c}{a_1}+a_2(\frac 1 2 -a_2-c)+\frac{\left(\frac{1}{2}-a_2-c\right)^2}{4}+\frac{(a_2+c)^2}{4}\right)\\
    \overset{t=a_2+c}{=}&n^2\left( \frac{0.28}{a_1}c(\frac1 2-t)+(t-c)(\frac 1 2-t)+\frac{1}{4}\left(\frac 1 2-t\right)^2+\frac{1}{4}t^2 \right)   \\
    =&n^2\left(\left(\frac{0.28}{a_1}-1\right)c\left(\frac 1 2-t\right)-\frac 1 2 t^2+\frac 1 4 t+\frac 1 {16}\right)
\end{aligned}
$$

Since $1-c\geq a_1\geq 1.18-t-c$, $\left(\frac{0.28}{a_1}-1\right)c\geq \left(\frac{0.28}{1.18-t-c}-1\right)c \overset{def}{=}g_t(c)$.
When $t\in(0.39,0.45]$ is fixed, $\frac {\mathrm{d}g_t (c)} {\mathrm{d}c}=\frac{0.28(1.18-t)-(1.18-t-c)^2}{(1.18-t-c)^2}<0.$ Hence, the function reaches the maximum when $c=0.18$, which implies:
$$
\frac{e(A)} {n^2}\leq 0.18\left(\frac{0.28}{1-t}-1\right)\left(\frac 1 2-t\right)-\frac 1 2 t^2+\frac 1 4 t+\frac 1 {16}.
$$
Notice that $f(t):= 0.18\left(\frac{0.28}{1-t}-1\right)\left(\frac 1 2-t\right)-\frac 1 2 t^2+\frac 1 4 t+\frac 1 {16}$ is decreasing with respect to $t\in[0.39,0.45]$, because $f^{'}(t)=0.43-t-\frac{0.0252}{(t-1)^2}<0$ when $t\in[0.39,0.45]$.
Hence, $e(A)\leq n^2 f(0.39)<0.0733n^2$. On the other hand, $|A^c\cap N_1|\geq (a_1+a_2+c-0.5)n\geq (0.68-c)n\geq 0.45n$, so by (5) of Lemma 3.1, $e(A^c)< 0.074n^2$. Therefore, $D_{2,\infty}^b(G)\leq 0.074n^2$.

\textbf{Case 3.3.2:} $0.23<c\leq 0.28$. To be precise, we are considering the following five cases: $c\in(0.22 + 0.01i, 0.23 + 0.01i]$ for $i \in [5]$.

 Since $a_2+c\geq 0.45$ and  $a_1+a_2+c\geq 1.18-c$, $0.5-0.01i-a_2-c\in[0,a_1]$. Hence, similarly to \textbf{Case 3.3.1}, by Proposition 3.4 there exists a vertex subset $Y_1\subset (N_1\setminus N_2)$ with $|Y_1|=(0.5-0.01i-a_2-c)n$ such that 
 $$e(Y_1,N_1\cap N_2)\leq\frac{0.5-0.01i-a_2-c}{a_1}e(N_1\setminus N_2,N_1\cap N_2)\leq n^2\frac{0.5-0.01i-a_2-c}{a_1}0.28cn^2.$$
 
 Now, we can choose a vertex subset $A$ of size $n/2$ such that $A\supset (N_2\cup Y_1)$ and $|A^c\cap (N_1\setminus N_2)|\geq 0.45$. This holds because, for each $i$, $a_1+a_2+c\geq 1.18-c\geq 0.95-0.01i$ when $c\in (0.22+0.01i,0.23+0.01i]$. Here, the condition $|A^c\cap(N_1\setminus N_2)|\geq 0.45n$ ensures that $e(A^c)<0.074 n^2$ by (5) of Lemma 3.1. On the other hand,
 $$
 \begin{aligned}
     e(A)=& e(Y_1,N_1\cap N_2)+e(Y_1,N_2\setminus N_1)+e(Y_1)+e(N_2) \\
     &+e(Y_1\cup N_2,A\setminus (Y_1\cup N_2))+e(A\setminus (Y_1\cup N_2))
 \end{aligned}
 $$

Let $t=a_2+c$. Now, $\frac{e(Y_1,N_1\cap N_2)}{n^2}\leq 0.28c\cdot \frac{0.5-0.01i-a_2-c}{a_1} \leq 0.28c\cdot\frac{0.5-0.01i-t}{1.18-t-c}$. Moreover, $G(I_1)$ is triangle-free, so $e(I_1)\leq \frac{1}{4}(0.5-0.01i-t)^2$; while $G(N_2)$ is triangle-free graph and contains an independent set of size $c>\frac{t}{2}$, so $e(I_2)\leq (t-c)c$ by Lemma 3.2. Therefore, we let

$$
\begin{aligned}
    \frac{e(A)}{n^2} &\leq 0.28c\frac{0.5-0.01i-t}{1.18-t-c}+(t-c)(0.5-0.01i-t)+(t-c)c+\frac{1}{4}(0.5-0.01i-t)^2
    \\&+(0.5-0.01i)\cdot 0.01i+\frac{1}{3}(0.01i)^2\overset{def}{=}F_i(c,t)
\end{aligned}
$$
where $0.39<t<0.45$, $c\in(0.22+0.01i,0.23+0.01i]$ and $i\in [5]$. Hence,

$$
\begin{aligned}
\frac{\mathrm{d}F_i(c,t)}{\mathrm{d}c}
&=(0.5-0.01i-t)\left(\frac{0.28}{1.18-t-c}+\frac{0.28c}{(1.18-t-c)^2}-1\right)+t-2c\\
&<(0.5-0.01i-t)\left(\frac{0.28}{0.5-0.01i}+\frac{0.28(0.23+0.01i)}{(0.5-0.01i)^2}-1\right)+0.45-0.44-0.02i\\
&\leq (0.5-0.01i-t)\left(\frac{0.28}{0.45}+\frac{0.28\cdot0.28}{0.45^2}-1\right)-0.01\\
&\leq 0.1\cdot 0.01-0.01<0
\end{aligned}
$$

Therefore, $F_i(c,t)\leq F_i(0.22+i,t)$. Actually,

\begin{align}
F_i(0.22+i,t)=
\begin{cases}
-\frac{3t^2 }{4}+\frac{141t}{200}-\frac{12077}{120000}+ \frac{\frac{161t}{2500}-\frac{7889}{250000}}{t-\frac{19}{20}} 
& i=1\\
-\frac{3t^2 }{4}+\frac{18t}{25}-\frac{791}{7500}+\frac{\frac{42t}{625}-\frac{504}{15625}}{t-\frac{47}{50}} 
& i=2\\
-\frac{3t^2 }{4}+\frac{147t}{200}-\frac{883}{8000}+\frac{\frac{7t}{100}-\frac{329}{10000}}{t-\frac{93}{100}}
& i=3\\
-\frac{3t^2 }{4}+\frac{3t}{4}-\frac{3461}{30000}+\frac{\frac{91t}{1250}-\frac{2093}{62500}}{t-\frac{23}{25}}
& i=4\\
-\frac{3t^2 }{4}+\frac{153t}{200}-\frac{14453}{120000}+\frac{\frac{189t}{2500}-\frac{1701}{50000}}{t-\frac{91}{100}}
& i=5
\end{cases}    
\end{align}

Below, we will show that $G_i(t):=F_i(0.22+i,t)(0.96-0.01i-t)-0.0739(0.96-0.01i-t)<0$ which implies $e(A) <0.074 n^2$. 
\begin{align}
    G_i(t) &= 
\begin{cases} 
\left(-\frac{3}{4}t^2 + \frac{141}{200}t - \frac{12077}{120000}\right) \left(\frac{19}{20} - t\right) - \frac{161}{2500}t + \frac{7889}{250000} - 0.0739 \left(t - \frac{19}{20}\right) & i = 1 \\
\left(-\frac{3}{4}t^2 + \frac{18}{25}t - \frac{791}{7500}\right) \left(\frac
 {47}{50} - t\right) - \frac{42}{625}t + \frac{504}{15625} - 0.0739 \left(t - \frac{47}{50}\right) & i = 2 \\
\left(-\frac{3}{4}t^2 + \frac{147}{200}t - \frac{883}{8000}\right) \left(\frac{93}{100} - t\right) - \frac{7}{100}t + \frac{329}{10000} - 0.0739 \left(t - \frac{93}{100}\right) & i = 3 \\
\left(-\frac{3}{4}t^2 + \frac{3}{4}t - \frac{3461}{30000}\right) \left(\frac{23}{25} - t\right) - \frac{91}{1250}t + \frac{2093}{62500} - 0.0739 \left(t - \frac{23}{25}\right) & i = 4 \\
\left(-\frac{3}{4}t^2 + \frac{153}{200}t - \frac{14453}{120000}\right) \left(\frac{91}{100} - t\right) - \frac{189}{2500}t + \frac{1701}{50000} - 0.0739 \left(t - \frac{91}{100}\right) & i = 5 
\end{cases}\\
&< \begin{cases}
0.75 t^3 - 1.4175 t^2 + 0.77990 t - 0.13416 & i = 1 \\
0.75 t^3 - 1.425 t^2 + 0.78898 t - 0.13625 & i = 2 \\
0.75 t^3 - 1.4325 t^2 + 0.79783 t - 0.13838 & i = 3 \\
0.75 t^3 - 1.44 t^2 + 0.80648 t - 0.14054 & i = 4 \\
0.75 t^3 - 1.4475 t^2 + 0.81490 t - 0.14273 & i = 5
\end{cases}
\end{align}

The figure of functions in (12) is given below.

\begin{figure}[H]
    \centering
    \includegraphics[width=0.5\linewidth]{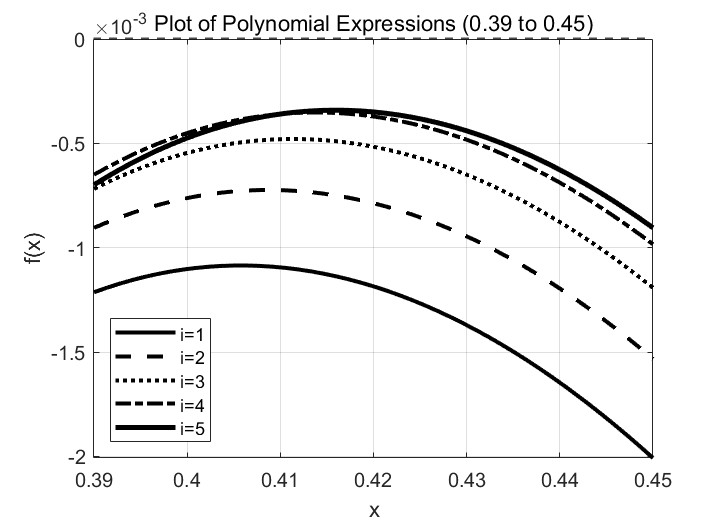}
    \caption{Graph of (12)}
    \label{fig:enter-label}
\end{figure}

Therefore, the $A$ we choose ensures $\max\{e(A),e(A^c)\}\leq 0.074n^2$.
\end{proof}

\begin{proof}[Proof of Theorem 1.8]
    If $e(G)\leq0.02959n^2$, then by Corollary 3.6, $D_{2,\infty}^b(G)<0.074n^2$. If $e(G)>0.02959n^2$, by Lemmas 4.1 and 4.3, $D_{2,\infty}^b(G)<0.074n^2$.
\end{proof}

\section{An upper bound for $I\vee H$}

\begin{proof}[Proof of Theorem 1.9]

Let  $G= H\vee I$, where $I$ is an independent set and $H$ is a triangle-free graph $H$. We will prove the Theorem 1.9 by considering the four cases regrading the order of the independents set.  

\textbf{Case 1}: $|I|>0.62n$. Now, there exists 2 disjoint vertex-subsets $I_1$ and $I_2$ with $|I_1|=|I_2|=0.31n$. Choose a vertex subset $A$ of size $n/2$ such that $A\supset I_1$ and $A^c\supset I_2$. Since $|I_1|=|I_2|\geq\frac{1}{3}|A|=\frac{1}{3}|A^c|$, by (7) of Lemma 3.3

$$
\max\{e(A),e(A^c)\}\leq \frac{1}{4}(0.5n-0.31n)(0.5n+3\cdot 0.31n)=0.067925n^2<\frac 5 {72} n^2
$$

\textbf{Case 2} $|I|\in(\frac{7}{12}n,0.62n]$. 
If $e(H)\leq 0.16|H|^2$, then $e(G)<0.271n^2$, which implies $D_{2,\infty}^b(G)<\frac{5}{72}n^2$ by Corollary 2.6.

If $e(H)> 0.16|H|^2$, we can delete $\frac{|H|^2}{25}$ edges to make $H$ bipartite by Theorem 3.7. This implies $G$ can be made tripartite by deleting $\frac{|H|^2}{25}$ edges. Moreover, $|H|=n-|I|<\frac {5n}{12}$. Therefore, by Theorem 1.4, 
$$D_{2,\infty}(G)\leq \frac{|H|^2}{25}+\frac{n^2}{16}< \frac 5 {72} n^2.$$

\textbf{Case 3} $|I|\in [0.5n,\frac{7}{12}n]$. Let $|I|=\alpha n$. Here, $\alpha \in[0.5,7/12]$.  If $\frac{e(H)}{n^2}< \frac{5}{72}-(1-\alpha) (\alpha-\frac 1 2)$, then choose a vertex subset $A$ of size $\frac{n}{2}$ such that $A^c\supset V(H)$. Then, $\max\{e(A),e(A^c)\}<\frac{5}{72}n^2$. 

If $\frac{e(H)}{n^2}\geq \frac{5}{72}-(1-\alpha) (\alpha-\frac 1 2)$, then by Theorem 3.8, we can delete $\left(e(H)-\frac{4 e(H)^2}{|H|^2}\right)$ edges to make $H$ bipartite. This implies $G$ can be made tripartite by deleting $e(H)-\frac{4 e(H)^2}{|H|^2}$ edges. Let $e_0=e(H)/n^2$. By Theorem 1.4, 
$$
\begin{aligned}
\frac{D_{2,\infty}^b(G)}{n^2} \leq& \frac{1}{16}+\frac{e(H)}{n^2}-\frac{4 e(H)^2}{|H|^2n^2}\\
=& \frac{1}{16}+e_0-\frac{4e_0^2}{(1-\alpha)^2}\overset{def}{=}g_\alpha(e_0)
\end{aligned}.
$$
where $\alpha \in[0.5,7/12]$ and $e_0\geq \frac{5}{72}-(1-\alpha) (\alpha-\frac 1 2)$. Actually, for a fixed $\alpha$, the function $g_\alpha(e_0)$ is decreasing with respect to $[\frac{(1-\alpha)^2}{8},+\infty)$. Moreover, when $\alpha \in[0.5,7/12]$, $\frac{5}{72}-(1-\alpha) (\alpha-\frac 1 2)-\frac{(1-\alpha)^2}{8}=\frac{1}{72}(21\alpha -16)(3\alpha -2)>0$. Hence, $g_\alpha(e_0)\leq g_\alpha(\frac{5}{72}-(1-\alpha) (\alpha-\frac 1 2))$. Let $f(\alpha):=g_\alpha(\frac{5}{72}-(1-\alpha) (\alpha-\frac 1 2))-\frac{1}{16}=\frac{5}{72}+(\alpha-1)(\alpha-\frac 1 2)-\frac{25}{1296(1-\alpha)^2}-\frac{5}{9}\cdot\frac{\frac{1}{2}-\alpha}{1-\alpha}-4(\alpha-\frac 1 2)^2$.
Note that when $\alpha \in[0.5,7/12]$, 

\begin{align}
    f^{'}(\alpha)&=\frac{5}{18}\cdot \frac 1 {(\alpha-1)^2}\left(1-\frac{5}{36(1-\alpha)}\right)-6\alpha +\frac 5 2\\
    &\leq \frac{5}{18}\cdot \frac 1 {(\alpha-1)^2}\cdot \frac{2}{3}-6\alpha +\frac{5}{2}\\
    &=\frac{1}{(\alpha-1)^2}\left(\frac{5}{27}-(6\alpha -\frac 5 2)(\alpha-1)^2\right) <0
\end{align}
(15) holds because the function $f_0(\alpha)=(6\alpha -2.5)(\alpha-1)^2$ is increasing with respect to $(-\infty,11/18]$, which implies $(6\alpha -\frac 5 2)(\alpha-1)^2-\frac {5}{27}\leq (6 \cdot 7/12-5/2)(7/12-1)^2<0$. Hence, if $\alpha \in[0.5,7/12]$, $f(\alpha)\leq f(7/12)=1/144$.  

Therefore, $\frac{D_{2,\infty}^b(G)}{n^2}\leq \frac{1}{16}+f(\frac{7}{12})\leq 5/72$. 

\textbf{Case 4} $\alpha(G)<0.5n$. If $e(H)\leq (\frac{5}{18}-0.001)n^2-|H|(n-|H|)$ (which implies $e(G)\leq (\frac{5}{18}-0.001)n^2$), then $\frac{D_{2,\infty}^b(G)}{|G|^2}<\frac{5}{72}$ by Corollary 3.6. 

If $e(H)> (\frac{5}{18}-0.001)n^2-|H|(n-|H|)$, then there exists a vertex $v\in V(H)$ satisfying $d_{H}(v)\geq \frac{2e(H)}{|H|}\geq \frac{0.553n^2}{|H|}+2|H|-2n$. Since $H$ is triangle-free, $N_H(v)$ is an independent set with $|N_H(v)|>{0.553n^2}{|H|}+2|H|-2n$. Hence, we can let $I_0$ be the independent set of $H$ with $|I_0|=\frac{0.553n^2}{|H|}+2|H|-2n$. By AM-GM inequality, $|V(I\cup I_0)|\geq \frac{0.553n^2}{|H|}+|H|-n\geq (2\sqrt{0.553}-1)n^2\geq 0.486 n$.

If $|V(I\cup I_0)|>\frac{n}{2}$, we can choose a vertex subset $A$ of size $\frac n 2$ such that $I\subset A\subset I\cup I_0$. Now, $A$ is triangle-free, and $A^c$ which is a subgraph of $H$ is also triangle-free. Therefore, by Mantel's theorem, $\frac{D_{2,\infty}^b(G)}{n^2}\leq \frac 1 {16} n^2<\frac{5}{72} n^2$.

If $|V(I\cup I_0)|<\frac{n}{2}$, we can choose a vertex subset $A$ of size $\frac n 2$ such that $A\supset I\cup I_0$. Note that $I\cup I_0$ is a triangle-free graph with $|I\cup I_0|\geq 0.486 n$, so by (2) of Lemma 3.1 $e(A)\leq |I\cup I_0|(|A|-\frac{3}{4}|I\cup I_0|)=0.065853n^2<\frac{5}{72}n^2$. On the other hand, since $A^c$ is triangle-free, $e(A)\leq \frac{(n/2)^2}{4}<\frac{5}{72}n^2$. Therefore, $D_{2,\infty}^b(G)<\frac{5}{72}n^2$ in this case.
    
\end{proof}
\section*{Acknowledgement}
This work is partly supported by the National Natural Science Foundation of China (Nos.12371354, 
12161141003) and Science and Technology Commission of Shanghai Municipality (No. 22JC1403600),
National Key R\&D Program of China under Grant No. 2022YFA1006400.

\bibliographystyle{abbrv}
\bibliography{ref}

\appendix
\end{document}